\def\<{\left\langle}
\def\>{\right\rangle}
\def\({\left(}
\def\){\right)}
\def\bea{\begin{eqnarray*}}
\def\eea{\end{eqnarray*}}
\newtheorem{main}{Theorem}
\newtheorem{thm}{Theorem}
\newtheorem{prop}{Proposition}
\newtheorem{defn}{Definition}
\newtheorem{cor}{Corollary}
\newenvironment{proof}{\medskip \noindent
{\bf Proof.}}{\hfill \rule{.5em}{1em}
\\}
\begin{document}
\sloppy
\title{Harnack Estimates for Nonlinear Heat Equations with Potentials in Geometric Flows}

\author{Hongxin Guo and Masashi Ishida}

%\date{}

\maketitle

\begin{abstract}
Let $M$ be a closed Riemannian manifold with a family of Riemannian metrics $g_{ij}(t)$ evolving by geometric flow $\partial_{t}g_{ij} = -2{S}_{ij}$, where $S_{ij}(t)$ is a family of smooth symmetric two-tensors on $M$. In this paper we derive differential Harnack estimates for positive solutions to the nonlinear heat equation with potential:
\begin{eqnarray*}
\frac{\partial f}{\partial t}  = {\Delta}f + \gamma (t) f\log f +aSf,
\end{eqnarray*}
where $\gamma (t)$ is a continuous function on $t$, $a$ is a constant and $S=g^{ij}S_{ij}$ is the trace of $S_{ij}$. Our Harnack estimates include many known results as special cases, and moreover lead to new Harnack inequalities for a variety geometric flows\footnote{2000 Mathematical Subject Classification: 53C44, 53C21}.
\end{abstract}

%%%%%%%%%%%%%%%%%%%%%%%%%%%%%%%%%%%%%%%%%%%%%%%%%%%%%%%%%%%%%%%%
\section{Introduction}\label{intro}
%%%%%%%%%%%%%%%%%%%%%%%%%%%%%%%%%%%%%%%%%%%%%%%%%%%%%%%%%%%%%%%%

Let $M$ be a closed Riemannian $n$-manifold with a one parameter family of Riemannian metrics $g(t)$ evolving by the geometric flow
\begin{eqnarray}\label{GF}
\frac{\partial}{\partial t}g_{ij} = - 2S_{ij},
\end{eqnarray}
where $S_{ij}(t)$ is a one parameter family of smooth symmetric two-tensors on $M$ and $t \in [0, T)$.

In a recent preprint \cite{G-I}, the authors studied Harnack inequalities for all positive solutions to
$$
\frac{\partial f}{\partial t} = -{\Delta}f + \gamma f\log f +aSf
$$
where $\gamma$ and $a$ are constants.
In the case where $S_{ij}=R_{ij}, \gamma=0$ and $a=1$, the above equation is Perelman's conjugate heat equation, and Harnack
estimates for all positive solutions have been studied by Cao \cite{C} and Kuang-Zhang \cite{K-Z}.

 The purpose of the current article is to study the forward nonlinear equations with potential terms under (\ref{GF}):
\begin{eqnarray}\label{forward HE}
\frac{\partial f}{\partial t}  = {\Delta}f + \gamma (t) f\log f +aSf,
\end{eqnarray}
where $\gamma (t)$ is a funtion on $t$ and $a$ is a constant. In the Ricci flow case, the consideration of this equation is motivated by expanding gradient Ricci solitons. See \cite{C-Z} for more details. In the Ricci flow, Cao-Hamilton \cite{C-H} proved various Harnack inequalities of
(\ref{forward HE}) for $\gamma(t)\equiv 0$. For general geometric flows, many people have studied Harnack inequality for the time-dependant heat equation, see for instance \cite{B-C-P, G-F, Liu}. For a positive solution $f$ of (\ref{forward HE}), let $u = - \log f$ and a direct computation tells us that $u$ satisfies
\begin{eqnarray}\label{forward HE u}
\frac{\partial u}{\partial t} = \Delta u - |\nabla u|^{2} + \gamma(t) u -aS .
\end{eqnarray}
Note that (\ref{forward HE}) and (\ref{forward HE u}) are equivalent equations.
\par
To state the main results, we introduce two quantities defined by Reto M\"uller \cite{Mu-1}.
\begin{defn}\label{h-d-de-1}
Suppose that $g(t)$ evolves by the geometric flow (\ref{GF}) and let $X=X^{i}\frac{\partial}{\partial x^{i}}$ be a vector field on $M$. One defines
\begin{eqnarray*}
{\mathcal H}(S_{ij}, {X})&=&\frac{\partial S}{\partial t} + \frac{S}{t} - 2{\nabla}_{i}SX^{i} + 2{S}^{ij}X_{i}{X}_{j}, \\
{\mathcal D}(S_{ij}, {X})&=&\frac{\partial S}{\partial t} - \Delta S - 2|S_{ij}|^{2}  + \left(4{\nabla}^{i}S_{i\ell} -2 {\nabla}_{\ell}S\right) {X}^{\ell} + 2 \left(R^{ij}-S^{ij}\right)X_iX_j
\end{eqnarray*}
where the upper indices are lifted by the metric, for instance $S^{ij}=g^{ik}g^{lj}S_{kl}$.
\end{defn}
We notice that $\mathcal H$ and $\mathcal D$ were firstly introduced by M{\"{u}}ller \cite{Mu-1} to prove the monotonicity of Perelman type reduced volume under (\ref{GF}). Later on they were used to prove entropy monotonicity and Harnack inequalities in \cite{G-H, G-I, G-P-T}. We also notice that when $M$ is static, namely when $S_{ij}=0$ one has
$$\mathcal H(0, X)=0, \quad \mathcal D(0, X)=R^{ij}X_iX_j.$$
In the Ricci flow, namely when $S_{ij}=R_{ij}$ one has
$$
\mathcal H(R_{ij}, X)=\frac{\partial R}{\partial t} + \frac{R}{t} - 2{\nabla}_{i}RX^{i} + 2{R}^{ij}X_{i}{X}_{j}, \quad \mathcal D(R_{ij}, X)=0
$$
and in this case $\mathcal H$ is nothing but Hamilton's trace Harnack quantity.
\par
For the equation (\ref{forward HE u}) in the case where $a=1$, we prove

\begin{main}\label{main-E}
Let $g(t)$ be a solution to the geometric flow (\ref{GF}) on a closed oriented smooth $n$-manifold $M$.
Assume for all $X$ and $t \in [0, T)$, it holds
\begin{eqnarray}\label{forward-assum-0}
2{\mathcal H}(S_{ij}, X) + {\mathcal D}(S_{ij}, X) \geq 0, \ S \geq 0
\end{eqnarray}
Let $u$ be a solution to
\begin{eqnarray*}
\frac{\partial u}{\partial t} = \Delta u - |\nabla u|^{2} + \gamma(t) u-S
\end{eqnarray*}
with
\begin{eqnarray}\label{func-condi}
-\frac{2}{t} \leq \gamma(t) \leq 0
\end{eqnarray}
for all time $t \in (0, T)$. Then for all $t \in (0, T)$,
\begin{eqnarray}\label{Har A}
{Q}_{S} = 2\Delta u - |\nabla u|^{2} -3{S} -2\frac{n}{t}\le 0.
\end{eqnarray}
\end{main}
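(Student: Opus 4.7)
The strategy is a parabolic maximum principle for $Q_S$ on $(0,T)\times M$. Since $M$ is closed and $u$ is smooth for $t>0$, the singular $-2n/t$ term drives $Q_S\to-\infty$ uniformly as $t\to 0^+$, so it suffices to rule out a strictly positive maximum of $Q_S$ at any interior point. I will introduce the nonlinear operator $\cL:=\partial_t-\Delta+2\nabla u\cdot\nabla$ adapted to the $-|\nabla u|^2$ nonlinearity in (\ref{forward HE u}), and show that $\cL Q_S<0$ wherever $Q_S>0$.

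First I compute the evolution of $Q_S$. Under (\ref{GF}), the Christoffel variation $\partial_t\Gamma^k_{ij}=-g^{kl}(\nabla_iS_{jl}+\nabla_jS_{il}-\nabla_lS_{ij})$ gives $(\partial_t-\Delta)(\Delta u)$ not only the familiar $2S^{ij}\nabla_i\nabla_j u$ but also $(2\nabla^jS_{jk}-\nabla_kS)\nabla^ku$; combining this with $(\partial_t-\Delta)|\nabla u|^2$ computed via Bochner and with $(\partial_t-\Delta)S$, then adding $2\nabla u\cdot\nabla Q_S$, cancels all gradient cross-terms in $\Delta u$ and $|\nabla u|^2$. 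After completing the square $-2|\nabla^2u|^2+4S_{ij}\nabla^i\nabla^ju=-2|\nabla^2u-S_{ij}|^2+2|S_{ij}|^2$ and checking that the remaining $S$-dependent contributions assemble into $-(2\mathcal H+\mathcal D)(S_{ij},-\nabla u)+\frac{2S}{t}$, I expect
\bea
\cL Q_S = -2|\nabla^2u-S_{ij}|^2+2\gamma\Delta u-2\gamma|\nabla u|^2+\frac{2n}{t^2}+\frac{2S}{t}-(2\mathcal H+\mathcal D)(S_{ij},-\nabla u).
\eea
The sign $X=-\nabla u$ (rather than $+\nabla u$) is essential: it is exactly what aligns the first-order-in-$S$ terms $\nabla^jS_{jk}\nabla^ku$ and $\nabla_kS\nabla^ku$ with the first-order part of $2\mathcal H+\mathcal D$; no Bianchi-type identity on $S_{ij}$ is available in this generality. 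By the hypothesis $2\mathcal H+\mathcal D\geq 0$, the M\"uller bracket is discarded.

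At an interior maximum $(p,t_*)$ of $Q_S$ with value $\varepsilon>0$, the parabolic max conditions $\partial_tQ_S\ge 0$, $\nabla Q_S=0$, $\Delta Q_S\le 0$ yield $\cL Q_S\geq 0$. Applying the trace inequality $|\nabla^2u-S_{ij}|^2\geq(\Delta u-S)^2/n$ together with the tautology $\Delta u-S=\tfrac12(|\nabla u|^2+S+2n/t+Q_S)$ from the definition of $Q_S$, the expansion of $-\tfrac{2}{n}(\Delta u-S)^2$ produces $-\tfrac{2n}{t^2}$ and $-\tfrac{2S}{t}$ that cancel exactly against the $+\tfrac{2n}{t^2}+\tfrac{2S}{t}$ above, plus a $-\tfrac{2|\nabla u|^2}{t}$ and a strictly negative $-\tfrac{2\varepsilon}{t}$. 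Substituting the same identity for $2\Delta u$ in the $\gamma$-terms rewrites them as $-\gamma|\nabla u|^2+3\gamma S+\tfrac{2n\gamma}{t}+\gamma\varepsilon$; the hypotheses $-2/t\leq\gamma\leq 0$ and $S\geq 0$ then give $-\gamma|\nabla u|^2\leq\tfrac{2}{t}|\nabla u|^2$ (absorbed by $-\tfrac{2|\nabla u|^2}{t}$) and render the remaining three terms $\leq 0$. Hence $\cL Q_S<0$, contradicting the maximum condition, and so $Q_S\leq 0$ throughout $(0,T)\times M$.

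The main obstacle is the bookkeeping in $\partial_t(\Delta u)$: the connection-variation contribution $(2\nabla^jS_{jk}-\nabla_kS)\nabla^ku$ is easy to overlook, yet it is precisely what forces the sign $X=-\nabla u$ and effects the collapse of the residual first-order-in-$S$ terms onto $\mathcal D(S_{ij},-\nabla u)$. Without it the theorem would demand an extra Bianchi-type identity on $S_{ij}$, which is not in the hypotheses.
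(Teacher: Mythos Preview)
Your argument is correct and follows essentially the same route as the paper: derive the evolution of $Q_S$ under $\cL=\partial_t-\Delta+2\nabla u\cdot\nabla$, isolate the $2\mathcal H+\mathcal D$ contribution, and close with the maximum principle using the hypotheses on $\gamma$ and $S$. The only cosmetic difference is that the paper completes the square as $\bigl|\nabla_i\nabla_j u - S_{ij} - \tfrac{1}{t}g_{ij}\bigr|^2$, which extracts a global $-\bigl(\tfrac{2}{t}-\gamma\bigr)Q_S$ term and yields the differential inequality $\partial_t Q_S\le \Delta Q_S-2\nabla^i Q_S\nabla_i u-(\tfrac{2}{t}-\gamma)Q_S$ directly, whereas you keep the square as $|\nabla^2 u - S_{ij}|^2$ and recover the same cancellations via the trace inequality at a hypothetical positive maximum.
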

Notice that in \cite {G-H}, (\ref{Har A}) was proved for $\gamma(t)\equiv 0$ under a slight different assumption.
On the other hand, notice that (\ref{func-condi}) is not satisfied for all time $t \in (0, T)$ if $\gamma(t)$ is a nonzero constant.
However, in the case where $\gamma(t) \equiv -1$, we are able to prove a similar result as follows:
\begin{main}\label{main-Ee}
Let $g(t)$ be a solution to the geometric flow (\ref{GF}) on a closed oriented smooth $n$-manifold $M$.
Assume that (\ref{forward-assum-0}) holds, namely
$2{\mathcal H}(S_{ij}, X) + {\mathcal D}(S_{ij}, X) \geq 0$ and $S \geq 0$.
Let $u$ be a solution to
\begin{eqnarray*}
\frac{\partial u}{\partial t} = \Delta u - |\nabla u|^{2} - u-S
\end{eqnarray*}
then for all time $t \in (0, T)$, the following holds:
\begin{eqnarray}\label{Har B}
{Q}_{S} = 2\Delta u - |\nabla u|^{2} -3{S} -2\frac{n}{t}\leq \frac{n}{4}.
\end{eqnarray}
\end{main}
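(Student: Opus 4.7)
My plan is to prove Theorem 2 by a parabolic maximum principle applied directly to $Q_S$, following the same skeleton as the proof of Theorem 1 but keeping careful track of the extra term that arises because $\gamma(t) \equiv -1$ violates the hypothesis $-2/t \leq \gamma(t) \leq 0$ as soon as $t > 2$. The first step is to derive the evolution equation for $Q_S$ under the coupled system given by the flow $\partial_t g_{ij} = -2S_{ij}$ and the heat equation $u_t = \Delta u - |\nabla u|^2 - u - S$. The standard ingredients are the commutator $[\partial_t, \Delta]u = 2S^{ij}\nabla_i \nabla_j u + (2\nabla^j S_{ij} - \nabla_i S)\nabla^i u$ coming from the variation of the Christoffel symbols, the Bochner formula $\Delta |\nabla u|^2 = 2|\mathrm{Hess}\,u|^2 + 2\nabla u \cdot \nabla \Delta u + 2R_{ij}\nabla^i u \nabla^j u$, and the direct expansion of $(\partial_t - \Delta)S$ using $S = g^{ij}S_{ij}$.

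Second, I would reorganize the resulting identity so that the terms involving $\partial_t S$, $\Delta S$, $\nabla S$, $|S_{ij}|^2$, and the Ricci--$S$ combination $(R^{ij}-S^{ij})\nabla_i u \nabla_j u$ assemble into the quantity $2\mathcal{H}(S_{ij},\nabla u) + \mathcal{D}(S_{ij},\nabla u)$, which is nonnegative by hypothesis. Combined with $S \geq 0$, this absorbs all the curvature terms of indefinite sign, exactly as in the proof of Theorem 1. After this absorption, applying the trace inequality $|\mathrm{Hess}\,u|^2 \geq (\Delta u)^2/n$ and completing the square in $\Delta u$, together with the contribution $2n/t^2$ coming from differentiating the $-2n/t$ piece of $Q_S$, the $\gamma u = -u$ contribution should collapse into a $-Q_S$ on the right-hand side, giving a differential inequality of the schematic form
\begin{equation*}
(\partial_t - \Delta)Q_S \;\leq\; 2 \nabla u \cdot \nabla Q_S \;-\; Q_S \;+\; \tfrac{n}{4},
\end{equation*}
where the constant $\tfrac{n}{4}$ is obtained as the maximum of a quadratic expression in $\Delta u$ arising from the completed square.

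Third, I would apply the parabolic maximum principle on the compact manifold $M$. Since $Q_S \to -\infty$ as $t \to 0^+$ because of the $-2n/t$ term, the function $Q_S - n/4$ is negative for small $t$. If it ever became positive, there would be a first time $t_0 > 0$ and a point $p_0$ where it attains a positive maximum; at such a point $\nabla Q_S = 0$, $\Delta Q_S \leq 0$ and $\partial_t Q_S \geq 0$, so the above inequality would give $0 \leq -Q_S(p_0,t_0) + n/4$, contradicting $Q_S(p_0,t_0) > n/4$. Hence $Q_S \leq n/4$ throughout $(0,T)$.

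The main obstacle is the precise identification of the constant $\tfrac{n}{4}$ in the evolution inequality. In Theorem 1 the hypothesis $\gamma(t) \geq -2/t$ is tailored so that the problematic $O(1/t^2)$ contributions cancel exactly; here, with $\gamma$ constant equal to $-1$, the analogous terms leave an $O(1)$ residue that one must dominate using $|\mathrm{Hess}\,u|^2 \geq (\Delta u)^2/n$. The delicate part of the calculation is verifying that after completing the square the residue is exactly $n/4$ and not larger, which requires carefully tracking the numerical coefficients from the Bochner identity through the identification of the $-Q_S$ term on the right-hand side.
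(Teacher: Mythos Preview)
Your overall strategy matches the paper's proof: derive the evolution of $Q_S$, absorb the curvature terms via $2\mathcal{H}+\mathcal{D}\geq 0$ and $S\geq 0$, bound the Hessian term by a trace inequality, complete a square to isolate the constant $n/4$, and finish with the maximum principle. Two technical points in your sketch, however, would not go through as written.

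First, the trace inequality must be applied to the \emph{shifted} Hessian, not the raw one. The evolution of $Q_S$ (the paper's Corollary~\ref{cor-D} with $a=-3$) contains the cross term $4S^{ij}\nabla_i\nabla_j u$, which has no sign; it is absorbed only after completing the square in the tensor variable to obtain $-2\bigl|\nabla_i\nabla_j u - S_{ij}-\tfrac{1}{t}g_{ij}\bigr|^2$. The relevant scalar bound is then $\bigl|\nabla_i\nabla_j u - S_{ij}-\tfrac{1}{t}g_{ij}\bigr|^2 \geq \tfrac{1}{n}\bigl(\Delta u - S - \tfrac{n}{t}\bigr)^2$, and the square is completed in the variable $X:=\Delta u - S - \tfrac{n}{t}$, not in $\Delta u$.

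Second, with $\gamma\equiv -1$ the evolution leaves a residual $+|\nabla u|^2$ (from $(-\tfrac{2}{t}-\gamma)|\nabla u|^2$) that does not ``collapse into $-Q_S$'' by itself. The paper substitutes $|\nabla u|^2 = 2X - Q_S - S$ (for $d=2$) and then uses the identity $-2\gamma X = \tfrac{2\gamma}{n}(X - \tfrac{n}{2})^2 - \tfrac{2\gamma}{n}X^2 - \tfrac{n\gamma}{2}$; for $-1\le\gamma\le 0$ both squared terms have the right sign and the leftover $-\tfrac{n\gamma}{2}$ produces, after shifting $Q_S\mapsto Q_S+\tfrac{n}{4}\gamma$, the inequality
\[
\Big(\partial_t - \Delta + 2\nabla u\cdot\nabla\Big)\Big(Q_S - \tfrac{n}{4}\Big)\ \le\ -\Big(\tfrac{2}{t}+2\Big)\Big(Q_S-\tfrac{n}{4}\Big),
\]
rather than your schematic $-Q_S+\tfrac{n}{4}$. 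Your maximum-principle endgame then applies verbatim.
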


For the equation (\ref{forward HE}) in the case where $a=0$, we shall prove
\begin{main}\label{main-F}
Suppose that $g(t)$, $t \in [0, T)$, evolves by the geometric flow (\ref{GF}) on a closed oriented smooth $n$-manifold $M$ with
\begin{eqnarray}\label{I-F-C-0}
{\mathcal I}(S_{ij}, X) := \left(R^{ij}-S^{ij}\right)X_iX_j \geq 0
\end{eqnarray}
for all $X$ and all time $t \in [0, T)$. Let $0<f<1$ be a positive solution to
\begin{eqnarray*}
\frac{\partial f}{\partial t} = {\Delta}_{}f +\gamma (t) f\log f,
\end{eqnarray*}
and $u = -\log f$. If $\gamma (t) \leq 0$ for all time $t \in [0, T)$, then
\begin{eqnarray}\label{Har C}
|\nabla u|^{2} - \frac{u}{t} \leq 0
\end{eqnarray}
holds for all time $t \in (0, T)$.
\end{main}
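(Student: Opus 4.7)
The plan is to introduce $P := |\nabla u|^{2} - u/t$ and run a parabolic maximum principle argument. First I would convert the equation for $f$ into one for $u$: writing $f = e^{-u}$ and substituting into $\partial_{t}f = \Delta f + \gamma(t)f\log f$ gives
\begin{eqnarray*}
\frac{\partial u}{\partial t} = \Delta u - |\nabla u|^{2} + \gamma(t) u.
\end{eqnarray*}
The assumption $0 < f < 1$ on the compact manifold $M$ forces $u > 0$, and by continuity $u$ has a uniform positive lower bound on a small time interval $[0, t_{\ast}]$ while $|\nabla u|^{2}$ stays bounded there. Hence $P(\cdot, t) \to -\infty$ uniformly as $t \to 0^{+}$, so $P$ is strictly negative near $t = 0$ and the maximum principle can be seeded from any sufficiently small starting time.

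Second, I would compute the parabolic identity for $P$. Bochner's formula combined with $\partial_{t} g^{ij} = 2 S^{ij}$ yields
\begin{eqnarray*}
(\partial_{t} - \Delta)|\nabla u|^{2} = -2{\mathcal I}(S_{ij}, \nabla u) - 2|\nabla^{2} u|^{2} + 2\gamma|\nabla u|^{2} - 2\langle \nabla u, \nabla |\nabla u|^{2}\rangle,
\end{eqnarray*}
and a short direct calculation gives $(\partial_{t} - \Delta)(-u/t) = |\nabla u|^{2}/t - \gamma u/t + u/t^{2}$. Substituting $\nabla |\nabla u|^{2} = \nabla P + \nabla u/t$ into the cross term, and rewriting the remaining scalar terms via $2|\nabla u|^{2} - u/t = P + |\nabla u|^{2}$ and $|\nabla u|^{2}/t - u/t^{2} = P/t$, one collects everything into
\begin{eqnarray*}
(\partial_{t} - \Delta + 2\nabla u \cdot \nabla) P = -2{\mathcal I} - 2|\nabla^{2} u|^{2} + \gamma|\nabla u|^{2} + (\gamma - 1/t)P.
\end{eqnarray*}
Under the hypotheses ${\mathcal I} \geq 0$ and $\gamma \leq 0$, each of the first three terms on the right is non-positive.

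Third, I would apply the maximum principle on $[t_{\ast}, T)$. If $P$ ever reached $0$, then at a first spacetime point $(x_{0}, t_{0})$ one has $\nabla P = 0$, $\Delta P \leq 0$, and $\partial_{t} P \geq 0$, hence $(\partial_{t} - \Delta + 2\nabla u \cdot \nabla)P \geq 0$ there. Combined with $P(x_{0}, t_{0}) = 0$, the identity forces $\nabla^{2} u(x_{0}, t_{0}) = 0$, ${\mathcal I} = 0$, and $\gamma|\nabla u|^{2} = 0$ at that point. Using $\nabla_{k} |\nabla u|^{2} = 2 u^{j} \nabla_{k}\nabla_{j} u$, the vanishing of the Hessian yields $\nabla |\nabla u|^{2}(x_{0}, t_{0}) = 0$; together with $\nabla P = 0$ this forces $\nabla u(x_{0}, t_{0}) = 0$. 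Substituting into $P(x_{0}, t_{0}) = 0$ gives $u(x_{0}, t_{0}) = 0$, contradicting $u > 0$. Letting $t_{\ast} \to 0^{+}$ finishes the proof.

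The main obstacle will be the sharpness of the conclusion $P \leq 0$, which rules out the standard device of adding an $\varepsilon$-perturbation to manufacture a strict parabolic inequality. In the borderline case $\gamma \equiv 0$, the maximum principle only delivers non-strict inequality at a hypothetical first zero of $P$, and it is the rigidity chain $\nabla^{2} u = 0 \Rightarrow \nabla|\nabla u|^{2} = 0 \Rightarrow \nabla u = 0$, together with the strict positivity $u > 0$ inherited from $f < 1$, that closes the contradiction.
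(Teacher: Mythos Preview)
Your proposal is correct and follows essentially the same route as the paper: the same Harnack quantity $P=|\nabla u|^{2}-u/t$, the same parabolic identity (the paper obtains it by specializing its general Proposition with $\alpha=0$, $\beta=-1$, $a=c=d=0$, $b=1$, while you compute it directly via Bochner), and the same seeding $P<0$ for small $t$.

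The only divergence is in the final step. The paper simply drops the nonpositive terms $-2\mathcal{I}$, $-2|\nabla^{2}u|^{2}$, $\gamma|\nabla u|^{2}$ to obtain the linear inequality
\[
\partial_{t}Q_{S}\le \Delta Q_{S}-2\nabla^{i}Q_{S}\nabla_{i}u-\Big(\tfrac{1}{t}-\gamma(t)\Big)Q_{S},
\]
and applies the weak parabolic maximum principle on $[t_{\ast},T)$. Your first-contact rigidity chain $\nabla^{2}u=0\Rightarrow\nabla|\nabla u|^{2}=0\Rightarrow\nabla u=0\Rightarrow u=0$ is valid and in fact yields the strict inequality $P<0$, but it is not needed: the ``main obstacle'' you flag is a non-issue, since the theorem only asserts $P\le 0$, and the standard weak maximum principle already delivers that non-strict conclusion without any analysis of a hypothetical first zero.
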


We notice that the above theorems in the case where $S_{ij}=R_{ij}$ imply the results proved in \cite{C-Z} as special cases.
In Theorems \ref{main-E} and \ref{main-Ee}, the assumptions are the same as stated by (\ref{forward-assum-0}).
In Theorem \ref{main-F}, the assumption is (\ref{I-F-C-0}). In the following section we will discuss the assumptions
in various geometric flows, and replace them by natural geometric assumptions in the corresponding flow. The rest of the article is devoted to proving the main theorems.

\vspace{0.2cm}
\noindent
{\bf Acknowledgements}
The first author was supported by NSFC (Grants No. 11001203 and 11171143) and Zhejiang Provincial Natural Science Foundation of China (Project No. LY13A010009). The second author was partially supported by the Grant-in-Aid for Scientific Research (C), Japan Society for the Promotion of Science, No. 25400074.

%%%%%%%%%%%%%%%%%%%%%%%%%%%%%%%%%%%%%%%%%%%%
\section{Examples}\label{ex}
%%%%%%%%%%%%%%%%%%%%%%%%%%%%%%%%%%%%%%%%%%%%

\noindent{\bf{(1) Static Riemannian manifold.}} In this case $S_{ij}=0$, $\mathcal H=0$ and $\mathcal D=R^{ij}X_iX_j$. Thus the assumptions
in Theorems \ref{main-E}, \ref{main-Ee} and \ref{main-F} can be replaced by $R_{ij}\ge 0$.

\noindent{\bf{(2) The Ricci flow.}} In this case $S_{ij} = {R}_{ij}$. Therefore, (\ref{forward-assum-0}) is equivalent to
\begin{eqnarray*}\label{curvature-1}
{\mathcal H}(S_{ij}, X) = \frac{\partial R}{\partial t} + \frac{R}{t} - 2{\nabla}_{i}RX^{i} + 2{R}^{ij}X_{i}X_{j} \geq 0, \ R \geq 0.
\end{eqnarray*}
It is known \cite{ha-1} that these conditions are satisfied if the initial metric $g(0)$ has weakly positive curvature operator. Hence, the assumptions in Theorems \ref{main-E} and  \ref{main-Ee} hold if $g(0)$ has weakly positive curvature operator.
Moreover, the assumption (\ref{I-F-C-0}) in Theorem \ref{main-F} is automatically satisfied. Notice that Theorem \ref{main-E} in the case where
$\gamma(t) = -2/(t + 2)$
is Theorem 1.2 in \cite{C-Z}. On the other hand, our Theorem \ref{main-Ee} is Theorem 1.1 in \cite{C-Z}.
Theorem \ref{main-F} in the case where $\gamma (t) =-1$ is nothing but Theorem 4.1 in \cite{Wu}.

\vspace{0.3cm}
\noindent{\bf{(3) List's extended Ricci flow.}} In this case, $S_{ij} = {R}_{ij}-2{\nabla}_i \psi {\nabla}_j \psi$ we have
\begin{eqnarray*}
{\mathcal D}(S_{ij}, X)= 4|\Delta \psi - {\nabla}_{X}\psi|^{2}
\end{eqnarray*}
In particular, (\ref{forward-assum-0}) is particularly satisfied if
\begin{eqnarray*}
{\mathcal H}(S_{ij}, X) \geq 0, \ R(0) \geq 2|\nabla \psi|^{2}_{t=0}.
\end{eqnarray*}
To the best our knowledge, it is still unknown whether ${\mathcal H}(S_{ij}, X) \geq 0$ is preserved by the Bernhard List's flow under a suitable assumption. The Ricci flow case is due to Hamilton \cite{ha-1} as we already mentioned. On the other hand, (\ref{I-F-C-0}) holds automatically.

\vspace{0.3cm}
\noindent{\bf{(4) M{\"{u}}ller's Ricci flow coupled with harmonic map flow.}}
In this case, $S_{ij} = {R}_{ij}-\alpha(t){\nabla}_i \phi {\nabla}_j \phi$ and moreover
${\mathcal D}(S_{ij}, X)= 2{\alpha(t)}\Big| \tau_{g} \phi - {\nabla}_{X}\phi \Big|^{2} -(\frac{\partial \alpha(t)}{\partial t})|\nabla \phi|^{2}.$
Therefore ${\mathcal D}(S_{ij}, X) \geq 0$ holds if $\alpha(t) \geq 0$ and $\frac{\partial \alpha(t)}{\partial t} \leq 0$. In this case, (\ref{forward-assum-0})  is particularly satisfied if
\begin{eqnarray*}
{\mathcal H}(S_{ij}, X) \geq 0, \ R(0) \geq \alpha(0)|\nabla \phi|^{2}_{t=0}.
\end{eqnarray*}
As in the case of List's flow, it is unknown whether ${\mathcal H}(S_{ij}, X) \geq 0$ is preserved by M{\"{u}}ller's flow under a suitable assumption.
As in other examples (\ref{I-F-C-0}) holds automatically.

%%%%%%%%%%%%%%%%%%%%%%%%%%%%%%%%%%%%%%%%%%%%
\section{General evolution equations}
%%%%%%%%%%%%%%%%%%%%%%%%%%%%%%%%%%%%%%%%%%%%

In this section, we shall prove general evolution equations of Harnack quantities under the geometric flow, which are useful to prove the main results. See Theorems \ref{for-thm-2} and \ref{for-thm-3} stated below. In the Ricci flow case, these general evolution equations are firstly proved by Cao and Hamilton \cite{C-H}. Theorems \ref{for-thm-2} and \ref{for-thm-3} can be seen as generalizations of Lemma 2.1 and Lemma 3.1 in \cite{C-H} respectively.

\subsection{Case of $u = - \log f$}\label{for-u-1}

Let $M$ be a closed Riemannian manifold with a Riemannian metric $g_{ij}(t)$ evolving by a geometric flow $\partial_{t}g_{ij} = -2{S}_{ij}$. Let $f$ be a positive solution of the following equation:
\begin{eqnarray}\label{nonlinear-heat-c}
\frac{\partial f}{\partial t} = {\Delta}_{}f + \gamma(t) f\log f - cSf,
\end{eqnarray}
where $c$ is a constant and $\gamma (t)$ is a function depneds on $t$. Let $u = - \log f$. A direct computation tells us that $u$ satisfies
\begin{eqnarray}\label{for-u-differ0}
\frac{\partial u}{\partial t} = \Delta u - |\nabla u|^{2} + cS + \gamma(t) u.
\end{eqnarray}
We introduce
\begin{defn}\label{Def-HJ}
Suppose that $g(t)$ evolves by (\ref{GF}) and let $S$ be the trace of $S_{ij}$. Let $X=X^{i}\frac{\partial}{\partial x^{i}}$ be a vector field on $M$. And $a, \alpha$ and $\beta$ are constants. Then, one defines
\begin{eqnarray*}
{\mathcal D}_{(a, \alpha, \beta)}(S_{ij}, X) &=& a \Big(\frac{\partial S}{\partial t} - \Delta S - 2|S_{ij}|^{2} \Big) - \alpha \Big(2{\nabla}^{i}S_{i\ell} - {\nabla}_{\ell}S \Big){X}^{\ell} \\
&+& 2{\beta} (R^{ij} -S^{ij}){X}_{i}{X}_{j}.
\end{eqnarray*}
\end{defn}
Then we prove
\begin{prop}\label{for-lem-key-1}
Let $g(t)$ be a solution to the geometric flow (\ref{GF}) and $u$ satisfies (\ref{for-u-differ0}). Let
\begin{eqnarray*}
{Q}_{S} = \alpha \Delta u - \beta |\nabla u|^{2} + a{S} - b\frac{u}{t}-d\frac{n}{t},
\end{eqnarray*}
where $\alpha, \beta, a, b$ and $d$ are constants. Then $Q_{S}$ satisfies
\begin{eqnarray*}
\frac{\partial Q_{S}}{\partial t}
&=& {\Delta}Q_{S} - 2{\nabla}^{i}Q_{S}{\nabla}_{i}u + 2(a-\beta c){\nabla}^{i}S{\nabla}_{i}u-2(\alpha-\beta)|\nabla \nabla u|^{2} \\
&-&2{\alpha}R^{ij}{\nabla}_{i}u{\nabla}_{j}u + 2{\alpha}S^{ij}{\nabla}_{i}{\nabla}_{j}u + {\alpha}c{\Delta}S -\frac{b}{t}|\nabla u|^{2} - \frac{b}{t}cS + \frac{b}{t^{2}}u + d\frac{n}{t^{2}} \\
&+& 2a|S_{ij}|^{2} + {\mathcal D}_{(a, \alpha, \beta)}(S_{ij}, -\nabla u) + \alpha \gamma(t) \Delta u - 2{\beta}\gamma(t) |\nabla u|^{2} - b\frac{\gamma(t)}{t}u.
\end{eqnarray*}
\end{prop}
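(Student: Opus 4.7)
The plan is to verify the identity by direct computation: differentiate $Q_S$ in time, expand each piece using the evolution equations induced by the flow, and then compare against $\Delta Q_S - 2\nabla^{i}Q_S\nabla_i u$. What remains after cancellation is precisely the geometric remainder $\mathcal D_{(a,\alpha,\beta)}(S_{ij},-\nabla u)$ together with the explicit $\gamma(t)$ and $1/t$ contributions. Throughout one uses that $\partial_t g^{ij}=2S^{ij}$, that the evolution of Christoffel symbols is
\begin{eqnarray*}
\partial_t\Gamma^{k}_{ij}=-g^{kl}\bigl(\nabla_i S_{jl}+\nabla_j S_{il}-\nabla_l S_{ij}\bigr),
\end{eqnarray*}
and the Bochner formula $\Delta|\nabla u|^{2}=2|\nabla\nabla u|^{2}+2\nabla^{i}u\nabla_i\Delta u+2R^{ij}\nabla_i u\nabla_j u$.

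The first key step is the commutation formula for $\partial_t$ and $\Delta$ on a function. A short calculation with $\partial_t\Gamma^k_{ij}$ above yields
\begin{eqnarray*}
\partial_t\Delta u-\Delta(\partial_t u)=2S^{ij}\nabla_i\nabla_j u+\bigl(2\nabla^{i}S_{il}-\nabla_l S\bigr)\nabla^{l}u.
\end{eqnarray*}
Substituting $\partial_t u=\Delta u-|\nabla u|^{2}+cS+\gamma(t)u$ from (\ref{for-u-differ0}) and applying the Bochner formula to $\Delta|\nabla u|^{2}$ converts $\Delta\partial_t u$ into $\Delta^{2}u-2|\nabla\nabla u|^{2}-2\nabla^{i}u\nabla_i\Delta u-2R^{ij}\nabla_i u\nabla_j u+c\Delta S+\gamma(t)\Delta u$. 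Similarly, $\partial_t|\nabla u|^{2}=2S^{ij}\nabla_i u\nabla_j u+2\nabla u\cdot\nabla(\partial_t u)$ produces the terms $2\nabla^{i}u\nabla_i\Delta u$, $-2\nabla^{i}u\nabla_i|\nabla u|^{2}$, $2c\nabla^{i}u\nabla_i S$, and $2\gamma(t)|\nabla u|^{2}$. Finally, $\partial_t(u/t)=\partial_t u/t-u/t^{2}$ feeds in the $-\frac{b}{t}|\nabla u|^{2}-\frac{bc}{t}S-\frac{b\gamma(t)}{t}u+\frac{b}{t^{2}}u$ contributions, while $\partial_t(-dn/t)=dn/t^{2}$.

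Next one computes $\Delta Q_S=\alpha\Delta^{2}u-2\beta|\nabla\nabla u|^{2}-2\beta\nabla^{i}u\nabla_i\Delta u-2\beta R^{ij}\nabla_i u\nabla_j u+a\Delta S-\frac{b}{t}\Delta u$ (using Bochner once more), and $-2\nabla^{i}u\nabla_i Q_S=-2\alpha\nabla^{i}u\nabla_i\Delta u+2\beta\nabla^{i}u\nabla_i|\nabla u|^{2}-2a\nabla^{i}u\nabla_i S+\frac{2b}{t}|\nabla u|^{2}$. Adding these to $\partial_t Q_S$ and subtracting, the fourth-order term $\alpha\Delta^{2}u$ cancels, the cross terms $\nabla^{i}u\nabla_i\Delta u$ telescope, and the coefficient of $|\nabla\nabla u|^{2}$ collects to $-2(\alpha-\beta)$. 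The terms involving $S$, $\nabla S$, and $R^{ij}-S^{ij}$ organize as $a(\partial_t S-\Delta S-2|S_{ij}|^{2})+\alpha(2\nabla^{i}S_{il}-\nabla_l S)\nabla^{l}u+2\beta(R^{ij}-S^{ij})\nabla_i u\nabla_j u$, which is exactly $\mathcal D_{(a,\alpha,\beta)}(S_{ij},-\nabla u)$ with the sign conventions of Definition \ref{Def-HJ} applied to $X^{l}=-\nabla^{l}u$; the leftover $2a|S_{ij}|^{2}$ and $\alpha c\Delta S$ pieces appear explicitly as stated. The transport-free $\nabla^{i}S\nabla_i u$ contribution combines as $(-2\beta c)+2a-2a=-2\beta c$, which is the $2(a-\beta c)$ displayed once combined with the $-2a\nabla^{i}u\nabla_i S$ piece absorbed into $-2\nabla^{i}u\nabla_i Q_S$.

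The principal obstacle is pure bookkeeping rather than conceptual: one has to keep track of the sign convention when substituting $X=-\nabla u$ into $\mathcal D_{(a,\alpha,\beta)}$ (where the middle term is linear in $X$ while the last is quadratic), and must be careful that the $2S^{ij}\nabla_i\nabla_j u$ contribution from $\partial_t\Delta u$ survives into the final formula (it is not absorbed into $\mathcal D$), and likewise that the $2S^{ij}\nabla_i u\nabla_j u$ from $\partial_t|\nabla u|^{2}$ correctly combines with the $2\beta(R^{ij}-S^{ij})\nabla_i u\nabla_j u$ piece of $\mathcal D$ plus the $-2\beta R^{ij}\nabla_i u\nabla_j u$ piece from $\Delta Q_S$ (using Bochner) to give exactly $-2\alpha R^{ij}\nabla_i u\nabla_j u+2\alpha S^{ij}\nabla_i\nabla_j u$ outside of $\mathcal D$. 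Once these coefficient matches are verified, the $\gamma(t)$ and $1/t$ terms fall out linearly and one recovers the stated formula.
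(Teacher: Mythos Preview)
Your proposal is correct and follows essentially the same route as the paper: compute $\partial_t Q_S$ term by term using the evolution of $\Delta u$ and $|\nabla u|^2$ (via the Christoffel variation formula), compute $\Delta Q_S-2\nabla^i Q_S\nabla_i u$, subtract, and use the Bochner identity to collect the Hessian and curvature pieces into $-2(\alpha-\beta)|\nabla\nabla u|^2$, $-2\alpha R^{ij}\nabla_i u\nabla_j u$, and $\mathcal D_{(a,\alpha,\beta)}(S_{ij},-\nabla u)$. The only cosmetic difference is that you invoke Bochner already when expanding $\Delta Q_S$, whereas the paper keeps $\Delta(|\nabla u|^2)$ intact and applies Bochner once at the end; this changes nothing. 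Your bookkeeping narrative around the $\nabla^i S\nabla_i u$ coefficient is a bit tangled (the line ``$(-2\beta c)+2a-2a=-2\beta c$'' is confusing), but the final coefficient $2(a-\beta c)$ you arrive at is correct.
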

\begin{proof}
First of all, notice that we have the following evolution equations, which follow from standard computations:
\begin{eqnarray*}
\frac{\partial}{\partial t} (\Delta u) &=& 2{S}^{ij}{\nabla}_{i}{\nabla}_{j}u + \Delta(\frac{\partial u}{\partial t}) - g^{ij}\Big(\frac{\partial}{\partial t}\Gamma^{k}_{ij} \Big)\nabla_{k}u, \\
\frac{\partial}{\partial t}(|\nabla u|^{2}) &=& 2{S}^{ij}{\nabla}_{i}u{\nabla}_{j}u + 2 \nabla^{i}(\frac{\partial u}{\partial t}) \nabla_{i} u.
\end{eqnarray*}
On the other hand, we also get the following by standard computations:
\begin{eqnarray*}
g^{ij}\Big(\frac{\partial}{\partial t}\Gamma^{k}_{ij} \Big) &=& -g^{k \ell} (2\nabla^{i}S_{i\ell} - \nabla_{\ell}S).
\end{eqnarray*}
By using these formulas and (\ref{for-u-differ0}), we obtain the following:
\begin{eqnarray*}
\frac{\partial Q_{S}}{\partial t} &=& \alpha \frac{\partial}{\partial t} (\Delta u) - \beta \frac{\partial}{\partial t}(|\nabla u|^{2}) + a\frac{\partial S}{\partial t}- \frac{b}{t}\frac{\partial u}{\partial t} + \frac{b}{t^{2}}u + d\frac{n}{t^{2}} \\
&=& \alpha \Big(2{S}^{ij}{\nabla}_{i}{\nabla}_{j}u + \Delta(\frac{\partial u}{\partial t}) - g^{ij}\Big(\frac{\partial}{\partial t}\Gamma^{k}_{ij} \Big)\nabla_{k}u \Big) \\
&-& \beta \Big(2{S}^{ij}{\nabla}_{i}u{\nabla}_{j}u + 2 \nabla^{i}(\frac{\partial u}{\partial t}) \nabla_{i} u \Big) + a\frac{\partial S}{\partial t} - \frac{b}{t}\frac{\partial u}{\partial t} + \frac{b}{t^{2}}u + d\frac{n}{t^{2}} \\
&=& \alpha \Big(2{S}^{ij}{\nabla}_{i}{\nabla}_{j}u + \Delta(\Delta u - |\nabla u|^{2} + cS + \gamma(t) u)
+ g^{k \ell} (2\nabla^{i}S_{i\ell} - \nabla_{\ell}S)\nabla_{k}u \Big) \\
&-& \beta \Big(2{S}^{ij}{\nabla}_{i}u{\nabla}_{j}u + 2 \nabla^{i}(\Delta u - |\nabla u|^{2} + cS + \gamma(t) u) \nabla_{i} u \Big) + a\frac{\partial S}{\partial t} \\
&-& \frac{b}{t}(\Delta u - |\nabla u|^{2} + cS + \gamma(t) u) + \frac{b}{t^{2}}u + d\frac{n}{t^{2}} \\
&=& 2 \alpha{S}^{ij}{\nabla}_{i}{\nabla}_{j}u + \alpha \Delta(\Delta u) - \alpha \Delta(|\nabla u|^{2}) + \alpha c \Delta S+  \alpha(2\nabla^{i}S_{i\ell} - \nabla_{\ell}S)\nabla^{\ell}u \\
&-& 2\beta {S}^{ij}{\nabla}_{i}u{\nabla}_{j}u - 2{\beta}\nabla^{i}(\Delta u)\nabla_{i}u + 2{\beta}\nabla^{i}(|\nabla u|^{2})\nabla_{i}u-2{\beta} c\nabla^{i}S \nabla_{i}u \\
&+& \frac{b}{t}|\nabla u|^{2} - \frac{b}{t}cS + \frac{b}{t^{2}}u + d\frac{n}{t^{2}} + a\frac{\partial S}{\partial t} - \frac{b}{t}\Delta u + \alpha \gamma(t) \Delta u - 2{\beta}\gamma(t) |\nabla u|^{2} - b\frac{\gamma(t)}{t}u.
\end{eqnarray*}
On the other hand, we also have the following by the definition of $Q_{S}$:
\begin{eqnarray*}\label{Def-H-22}
{\Delta}Q_{S} &=& \alpha \Delta(\Delta u) - \beta \Delta(|\nabla u|^{2}) + a\Delta{S} - \frac{b}{t}\Delta u. \\
{\nabla}^{i}Q_{S} &=& \alpha \nabla^{i}(\Delta u) - \beta \nabla^{i}(|\nabla u|^{2}) + a\nabla^{i}{S} - \frac{b}{t}\nabla^{i}u
\end{eqnarray*}
Therefore we get
\begin{eqnarray*}
{\Delta}Q_{S}-2{\nabla}^{i}Q_{S}{\nabla}_{i}u &=& \alpha \Delta(\Delta u) - \beta \Delta(|\nabla u|^{2}) + a\Delta{S} - \frac{b}{t}\Delta u \\
&-& 2{\alpha}{\nabla}^{i}(\Delta u){\nabla}_{i}u + 2\beta{\nabla}^{i}(|\nabla u|^{2}){\nabla}_{i}u - 2a{\nabla}^{i}S{\nabla}_{i}u + \frac{2b}{t}|\nabla u|^{2}.
\end{eqnarray*}
By using this, we are able to obtain
\begin{eqnarray*}
\frac{\partial Q_{S}}{\partial t} &=& {\Delta}Q_{S} - 2{\nabla}^{i}Q{\nabla}_{i}u + 2 \alpha{S}^{ij}{\nabla}_{i}{\nabla}_{j}u - 2\beta {S}^{ij}{\nabla}_{i}u{\nabla}_{j}u - (\alpha - \beta) \Delta (|\nabla u|^{2}) \\
&+& (\alpha c-a)\Delta S + \alpha (2{\nabla}^{i}S_{i \ell} - {\nabla}_\ell S){\nabla}^{\ell}u + 2(\alpha-\beta){\nabla}^{i}(\Delta u){\nabla}_{i}u  \\
&+& 2(a-\beta c){\nabla}^{i}S{\nabla}_{i}u -\frac{b}{t}|\nabla u|^{2} + a\frac{\partial S}{\partial t} - \frac{b}{t}cS + \frac{b}{t^{2}}u + d\frac{n}{t^{2}} \\
&+& \alpha \gamma(t) \Delta u - 2{\beta}\gamma(t) |\nabla u|^{2} - b \frac{\gamma(t)}{t}u.
\end{eqnarray*}
On the other hand, we also have the following Bochner-Weitzenbock type formula:
\begin{eqnarray*}
\Delta (|\nabla u|^{2}) = 2|\nabla \nabla u|^{2} + 2{\nabla}^{i}(\Delta u){\nabla}_{i}u + 2{R}^{ij}{\nabla}_{i}u{\nabla}_{j}u.
\end{eqnarray*}
By using this formula, we get
\begin{eqnarray*}
\frac{\partial Q_{S}}{\partial t} &=& {\Delta}Q_{S} - 2{\nabla}^{i}Q_{S}{\nabla}_{i}u + 2(a-\beta c){\nabla}^{i}S{\nabla}_{i}u-2(\alpha-\beta)|\nabla \nabla u|^{2} \\
&-&2{\alpha}R^{ij}{\nabla}_{i}u{\nabla}_{j}u + 2{\alpha}S^{ij}{\nabla}_{i}{\nabla}_{j}u + {\alpha}c{\Delta}S -\frac{b}{t}|\nabla u|^{2} - \frac{b}{t}cS + \frac{b}{t^{2}}u + d\frac{n}{t^{2}} \\
&+& 2a|S_{ij}|^{2} + a \Big(\frac{\partial S}{\partial t} -{\Delta}S - 2|S_{ij}|^{2} \Big) + \alpha(2{\nabla}^{i}S_{i \ell} - {\nabla}_\ell S){\nabla}^{\ell}u \\
&+& 2{\beta}(R^{ij} - S^{ij}){\nabla}_{i}u{\nabla}_{j}u + \alpha \gamma(t) \Delta u - 2{\beta}\gamma(t) |\nabla u|^{2} - b \frac{\gamma(t)}{t}u \\
&=& {\Delta}Q_{S} - 2{\nabla}^{i}Q_{S}{\nabla}_{i}u + 2(a-\beta c){\nabla}^{i}S{\nabla}_{i}u-2(\alpha-\beta)|\nabla \nabla u|^{2} \\
&-&2{\alpha}R^{ij}{\nabla}_{i}u{\nabla}_{j}u + 2{\alpha}S^{ij}{\nabla}_{i}{\nabla}_{j}u + {\alpha}c{\Delta}S -\frac{b}{t}|\nabla u|^{2} - \frac{b}{t}cS + \frac{b}{t^{2}}u + d\frac{n}{t^{2}} \\
&+& 2a|S_{ij}|^{2} + {\mathcal D}_{(a, \alpha, \beta)}(S_{ij}, -\nabla u) + \alpha \gamma(t) \Delta u - 2{\beta}\gamma(t) |\nabla u|^{2} - b \frac{\gamma(t)}{t}u,
\end{eqnarray*}
where we used Definition \ref{Def-HJ}.
\end{proof}

%In particular, we shall use Lemma \ref{lem-key-1} to prove Theorem \ref{main-E}. The following result is used to prove Theorem \ref{main-A}.

\begin{thm}\label{for-thm-2}
Suppose that $\alpha \not= 0$ and $\alpha \not= \beta$. Then, the evolution equation in Proposition \ref{for-lem-key-1} can be rewritten as follows:
\begin{eqnarray*}\label{Def-H}
\frac{\partial Q_{S}}{\partial t} &=& \Delta{Q_{S}} -2{\nabla}^{i}Q_{S}{\nabla}_{i}u - 2(\alpha - \beta)\Big|\nabla_{i}\nabla_{j}u - \frac{\alpha}{2(\alpha-\beta)}S_{ij}-\frac{\lambda}{2t}g_{ij} \Big|^{2} \\
&+& 2(a-\beta c){\nabla}^{i}u{\nabla}_{i}S - \frac{2(\alpha -\beta)}{\alpha}\frac{\lambda}{t}Q_{S} + \frac{(\alpha -\beta)n{\lambda}^{2}}{2t^{2}}- \Big(b + \frac{2(\alpha-\beta)\lambda {\beta}}{\alpha} \Big)\frac{|\nabla u|^{2}}{t} \\
&+& \Big(2a + \frac{\alpha^{2}}{2(\alpha-\beta)} \Big)|S_{ij}|^{2} + \Big(\alpha{\lambda} - bc + \frac{2(\alpha-\beta)\lambda a}{\alpha} \Big)\frac{S}{t} + \Big(1-\frac{2(\alpha-\beta)\lambda}{\alpha} \Big)\frac{b}{t^{2}}u \\
&+& \Big(1-\frac{2(\alpha-\beta)\lambda}{\alpha} \Big)\frac{d}{t^{2}}n + \alpha c {\Delta}S - 2{\alpha}R^{ij}{\nabla}_{i}u{\nabla}_{j}u + {\mathcal D}_{(a, \alpha, \beta)}(S_{ij}, -{\nabla}u) \\
& + & \alpha \gamma(t) \Delta u - 2{\beta}\gamma(t) |\nabla u|^{2} - b\frac{\gamma(t)}{t}u,
\end{eqnarray*}
where $\lambda$ is a constant.
\end{thm}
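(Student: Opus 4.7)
The plan is purely algebraic: complete the square on the Hessian terms of the evolution equation in Proposition \ref{for-lem-key-1}, and then use the very definition of $Q_S$ to eliminate the resulting $\Delta u$. Starting from that evolution equation, the two Hessian-involving terms $-2(\alpha-\beta)|\nabla\nabla u|^{2}+2\alpha S^{ij}\nabla_i\nabla_j u$ can be grouped, with a multiple of $g_{ij}$ thrown in, into the perfect square
\begin{eqnarray*}
-2(\alpha-\beta)\Big|\nabla_{i}\nabla_{j}u-\frac{\alpha}{2(\alpha-\beta)}S_{ij}-\frac{\lambda}{2t}g_{ij}\Big|^{2},
\end{eqnarray*}
at the cost of correction terms $\frac{\alpha^{2}}{2(\alpha-\beta)}|S_{ij}|^{2}$, $\frac{(\alpha-\beta)n\lambda^{2}}{2t^{2}}$, $-\frac{2(\alpha-\beta)\lambda}{t}\Delta u$, and $\frac{\alpha\lambda}{t}S$ that must be added back. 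Here the hypothesis $\alpha\neq\beta$ is precisely what makes dividing by $\alpha-\beta$ legitimate, and $\lambda$ is an auxiliary parameter left free so that it can be tuned in the applications.

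Next I would rewrite the leftover $\Delta u$ in terms of $Q_S$. Since
\begin{eqnarray*}
\alpha\,\Delta u = Q_{S}+\beta|\nabla u|^{2}-aS+\frac{b}{t}u+\frac{dn}{t},
\end{eqnarray*}
dividing by $\alpha$ (allowed because $\alpha\neq 0$) and then multiplying by $-\frac{2(\alpha-\beta)\lambda}{t}$ produces contributions to the $Q_S/t$, $|\nabla u|^{2}/t$, $S/t$, $u/t^{2}$, and $n/t^{2}$ terms of exactly the form displayed in the theorem. In particular, this is the step that generates the key multiplier $-\frac{2(\alpha-\beta)\lambda}{\alpha t}Q_{S}$ and explains the bracketed factors $(1-\frac{2(\alpha-\beta)\lambda}{\alpha})$ on the $u/t^{2}$ and $n/t^{2}$ terms.

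All that remains is bookkeeping: combine the $|\nabla u|^{2}/t$ contributions ($-\frac{b}{t}$ from Proposition \ref{for-lem-key-1} together with $-\frac{2(\alpha-\beta)\lambda\beta}{\alpha t}$ from the substitution), the $|S_{ij}|^{2}$ contributions ($2a$ from the original formula together with $\frac{\alpha^{2}}{2(\alpha-\beta)}$ from the square), and analogously for the $S/t$, $u/t^{2}$, and $n/t^{2}$ terms. The remaining pieces $\alpha c\,\Delta S$, $-2\alpha R^{ij}\nabla_i u\nabla_j u$, $\mathcal{D}_{(a,\alpha,\beta)}(S_{ij},-\nabla u)$, $2(a-\beta c)\nabla^{i}S\nabla_i u$, and the three $\gamma(t)$-dependent terms pass through unchanged from Proposition \ref{for-lem-key-1}.

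The main obstacle is purely clerical: keeping every sign and factor straight through the square completion and the $\Delta u$ substitution, since a number of the coefficients interact in several places simultaneously. There is no analytic or geometric content in this step beyond what Proposition \ref{for-lem-key-1} already supplies; the whole point of the rearrangement is to prepare $Q_S$ for a maximum-principle argument in Theorems \ref{main-E} and \ref{main-Ee}, where the perfect-square term (which is nonpositive) and the $-\frac{2(\alpha-\beta)\lambda}{\alpha t}Q_S$ multiplier are precisely what is needed to close the parabolic inequality with the right choice of the parameters $\alpha,\beta,a,b,d,\lambda$.
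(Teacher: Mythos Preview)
Your proposal is correct and follows essentially the same route as the paper: complete the square on the Hessian terms (producing exactly the four correction terms you list), then use the definition of $Q_S$ to trade the resulting $-\tfrac{2(\alpha-\beta)\lambda}{t}\Delta u$ for the $Q_S/t$, $|\nabla u|^2/t$, $S/t$, $u/t^2$, and $n/t^2$ contributions, and finally collect terms. The paper's proof organizes the same computation into two displayed identities and then substitutes, but there is no substantive difference.
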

\begin{proof}
First of all, notice that a direct computation implies
\begin{eqnarray*}
&-&2(\alpha-\beta)\Big|\nabla_{i}\nabla_{j}u - \frac{\alpha}{2(\alpha-\beta)}S_{ij}-\frac{\lambda}{2t}g_{ij} \Big|^{2} =  -2(\alpha-\beta)|\nabla \nabla u|^{2} + 2{\alpha}S^{ij}{\nabla}_{i}{\nabla}_{j}u \\
&+& 2(\alpha-\beta)\frac{\lambda}{t} \Delta u - \frac{\lambda \alpha}{t}S - \frac{\alpha^{2}}{2(\alpha-\beta)} |S_{ij}|^{2} - \frac{(\alpha - \beta)\lambda^{2} n}{2t^{2}}.
\end{eqnarray*}
Therefore we get the following:
\begin{eqnarray*}
&-&2(\alpha-\beta)|\nabla \nabla u|^{2} + 2{\alpha}S^{ij}{\nabla}_{i}{\nabla}_{j}u + 2a|S_{ij}|^{2} \\
&=& -2(\alpha-\beta)\Big|\nabla_{i}\nabla_{j}u - \frac{\alpha}{2(\alpha-\beta)}S_{ij}-\frac{\lambda}{2t}g_{ij} \Big|^{2} - 2(\alpha-\beta)\frac{\lambda}{t} \Big(\Delta u - \frac{\alpha S}{2(\alpha-\beta)} \Big) \\
&+& \frac{(\alpha - \beta)\lambda^{2} n}{2t^{2}} + \Big(2a + \frac{\alpha^{2}}{2(\alpha-\beta)} \Big)|S_{ij}|^{2}.
\end{eqnarray*}
By this and Lemma \ref{for-lem-key-1}, we obtain
\begin{eqnarray*}
\frac{\partial Q_{S}}{\partial t}
&=& {\Delta}Q_{S} - 2{\nabla}^{i}Q_{S}{\nabla}_{i}u - 2(\alpha-\beta)\Big|\nabla_{i}\nabla_{j}u - \frac{\alpha}{2(\alpha-\beta)}S_{ij} - \frac{\lambda}{2t}g_{ij} \Big|^{2} \\
&+& 2(a-\beta c){\nabla}^{i}S{\nabla}_{i}u - 2(\alpha-\beta)\frac{\lambda}{t} \Big(\Delta u - \frac{\alpha S}{2(\alpha-\beta)} \Big) + \frac{(\alpha - \beta)\lambda^{2} n}{2t^{2}} \\
&+& \Big(2a + \frac{\alpha^{2}}{2(\alpha-\beta)} \Big)|S_{ij}|^{2} + \alpha c {\Delta}S - 2\alpha{R}^{ij}{\nabla}_{i}u{\nabla}_{j}u - \frac{b}{t}|\nabla u|^{2} - \frac{b}{t}cS \\
&+& \frac{b}{t^{2}}u + d\frac{n}{t^{2}} + {\mathcal D}_{(a, \alpha, \beta)}(S_{ij}, -\nabla u) + \alpha \gamma(t) \Delta u - 2{\beta}\gamma(t) |\nabla u|^{2} - b\frac{\gamma(t)}{t}u.
\end{eqnarray*}
On the other hand, we also get the following by using the definition of $Q_{S}$:
\begin{eqnarray*}
&-&2(\alpha-\beta)\frac{\lambda}{t} \Big(\Delta u - \frac{\alpha S}{2(\alpha-\beta)} \Big)- \frac{b}{t}|\nabla u|^{2} - \frac{b}{t}cS + \frac{b}{t^{2}}u + d\frac{n}{t^{2}} \\
&=& - \frac{2(\alpha - \beta)}{\alpha}\frac{\lambda}{t}Q_{S} - \Big(b + \frac{2(\alpha -\beta)\lambda \beta}{\alpha} \Big)\frac{|\nabla u|^{2}}{t} + \Big(1- \frac{2(\alpha- \beta)\lambda}{\alpha} \Big)\frac{b}{t^{2}}u \\
&+& \Big(\alpha{\lambda} -bc + \frac{2(\alpha-\beta)\lambda a}{\alpha} \Big)\frac{S}{t} + \Big(1-\frac{2(\alpha-\beta)\lambda}{\alpha}  \Big)\frac{d}{t^{2}}n.
\end{eqnarray*}
Using this equation, we get the claim.
\end{proof}

As a special case, we obtain the following result:
\begin{cor}\label{cor-D}
Let $g(t)$ be a solution to the geometric flow (\ref{GF}) and $u$ satisfies
\begin{eqnarray*}
\frac{\partial u}{\partial t} = \Delta u - |\nabla u|^{2} -(a+4)S + \gamma(t) u.
\end{eqnarray*}
Let
\begin{eqnarray*}
{Q}_{S} = 2\Delta u - |\nabla u|^{2} +a{S} -d\frac{n}{t},
\end{eqnarray*}
where $a$ and $d$ are constants. Then $Q_{S}$ satisfies
\begin{eqnarray*}
\frac{\partial Q_{S}}{\partial t} &=& \Delta{Q_{S}} -2{\nabla}^{i}Q_{S}{\nabla}_{i}u - 2\Big|\nabla_{i}\nabla_{j}u - S_{ij}-\frac{1}{t}g_{ij} \Big|^{2} - \Big(\frac{2}{t} -\gamma(t) \Big)Q_{S} \\
&+& \Big(-\frac{2}{t} -\gamma(t) \Big)|\nabla u|^{2} - a \gamma(t)S + 2(a+2){\mathcal H}(S_{ij}, -{\nabla}u) + \frac{n}{t^{2}}(2-d) + d{\gamma}(t)\frac{n}{t} \\
&-& \Big((a+4)\frac{\partial S}{\partial t} -2|S_{ij}|^{2} + (3a+8)\Delta S \Big) + 2\Big( 2{\nabla}^{i}S_{i \ell} - {\nabla}_{\ell}S \Big){\nabla}^{\ell}u \\
&-& 2 \Big(R^{ij} + (2a+5)S^{ij} \Big){\nabla}_{i}u{\nabla}_{j}u
\end{eqnarray*}
\end{cor}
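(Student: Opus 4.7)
The statement is a specialization of Theorem \ref{for-thm-2}: in the corollary, $Q_S=2\Delta u-|\nabla u|^2+aS-d\frac{n}{t}$ and the drift in the equation for $u$ has potential $cS=-(a+4)S$. So the plan is to instantiate Theorem \ref{for-thm-2} with
\begin{eqnarray*}
\alpha=2,\qquad \beta=1,\qquad b=0,\qquad c=-(a+4),
\end{eqnarray*}
and then choose $\lambda$ so that the Hessian-square term takes the form stated in the corollary. Since the general Hessian term in Theorem \ref{for-thm-2} is $-2(\alpha-\beta)\big|\nabla_i\nabla_j u-\frac{\alpha}{2(\alpha-\beta)}S_{ij}-\frac{\lambda}{2t}g_{ij}\big|^2$, the substitution $\alpha-\beta=1$, $\alpha=2$ gives $-2|\nabla\nabla u-S_{ij}-\frac{\lambda}{2t}g_{ij}|^2$, and requiring the last tensor to be $\frac{1}{t}g_{ij}$ forces $\lambda=2$.

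The bulk of the work is bookkeeping. I would first substitute these constants into every coefficient appearing in Theorem \ref{for-thm-2}. For instance, the factor multiplying $|S_{ij}|^2$ becomes $2a+\frac{\alpha^2}{2(\alpha-\beta)}=2a+2$, the coefficient of $\nabla^i S\nabla_i u$ becomes $2(a-\beta c)=4a+8$, the $S/t$ coefficient becomes $\alpha\lambda-bc+\frac{2(\alpha-\beta)\lambda a}{\alpha}=4+2a$, the $Q_S/t$ factor becomes $\frac{2(\alpha-\beta)\lambda}{\alpha}=2$, the $|\nabla u|^2/t$ factor becomes $\frac{2(\alpha-\beta)\lambda\beta}{\alpha}=2$, the $u/t^2$ and $n/t^2$ coefficients simplify (the former vanishes because $b=0$, the latter yields $(1-2)d\frac{n}{t^2}$ to combine with the leftover $\frac{(\alpha-\beta)\lambda^2 n}{2t^2}=\frac{2n}{t^2}$), and $\alpha c\Delta S=-(2a+8)\Delta S$. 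Next I would expand $\mathcal D_{(a,2,1)}(S_{ij},-\nabla u)$ by its definition, producing contributions $a\partial_t S-a\Delta S-2a|S_{ij}|^2+(4\nabla^i S_{i\ell}-2\nabla_\ell S)\nabla^\ell u+2(R^{ij}-S^{ij})\nabla_i u\nabla_j u$, and combine these with the terms already collected.

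The grouping into $\mathcal H$ is then forced: adding and subtracting $2(a+2)\mathcal H(S_{ij},-\nabla u)$ is the unique choice that simultaneously cancels the $S/t$ coefficient $(4+2a)$ and leaves the clean remainders $-(a+4)\partial_t S$, $2(2\nabla^i S_{i\ell}-\nabla_\ell S)\nabla^\ell u$, $-(2a+5)\cdot 2 S^{ij}\nabla_i u\nabla_j u$ that appear in the corollary (since $\mathcal H$ contributes $\partial_t S+S/t+2\nabla^i S\nabla_i u+2S^{ij}\nabla_i u\nabla_j u$ when evaluated at $-\nabla u$). Finally, for the $\gamma(t)$-terms, the identity
\begin{eqnarray*}
2\gamma(t)\Delta u-2\gamma(t)|\nabla u|^2 = \gamma(t)\,Q_S-\gamma(t)|\nabla u|^2 - a\gamma(t)S + d\gamma(t)\tfrac{n}{t},
\end{eqnarray*}
obtained by substituting $2\Delta u=Q_S+|\nabla u|^2-aS+d\frac{n}{t}$, lets one absorb $\gamma(t)\,Q_S$ into the $-\frac{2}{t}Q_S$ coefficient and route the residuals to the correct $|\nabla u|^2$, $S$, and $n/t$ slots.

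There is no real obstacle here beyond attentive arithmetic; the only nontrivial decision is choosing $\lambda=2$ to match the Hessian square and $2(a+2)$ as the $\mathcal H$-coefficient. Once those two parameters are fixed, every remaining coefficient is determined and it suffices to check that the collected constants reproduce $-(3a+8)\Delta S$, $-2(R^{ij}+(2a+5)S^{ij})\nabla_i u\nabla_j u$, $2|S_{ij}|^2$, $\frac{n}{t^2}(2-d)+d\gamma(t)\frac{n}{t}$, and so forth, exactly as in the statement.
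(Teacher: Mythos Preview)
Your proposal is correct and follows essentially the same route as the paper: specialize Theorem~\ref{for-thm-2} with $\alpha=2$, $\beta=1$, $b=0$, $c=-(a+4)$, $\lambda=2$, expand ${\mathcal D}_{(a,2,1)}(S_{ij},-\nabla u)$, regroup into $2(a+2){\mathcal H}(S_{ij},-\nabla u)$, and rewrite the $\gamma(t)$-terms via $2\gamma(t)(\Delta u-|\nabla u|^2)=\gamma(t)Q_S-\gamma(t)|\nabla u|^2-a\gamma(t)S+d\gamma(t)\tfrac{n}{t}$. The bookkeeping you outline matches the paper's computation term by term.
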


\begin{proof}
By Theorem \ref{for-thm-2} in the case where $\alpha = 2$, $\beta = 1$, $b = 0$, $c=-a-4$, $\lambda = 2$, we obtain
\begin{eqnarray*}
\frac{\partial Q_{S}}{\partial t} &=& \Delta{Q_{S}} -2{\nabla}^{i}Q_{S}{\nabla}_{i}u - 2\Big|\nabla_{i}\nabla_{j}u - S_{ij}-\frac{1}{t}g_{ij} \Big|^{2} - \frac{2}{t}Q_{S} - \frac{2}{t}|\nabla u|^{2} -2(a+4)\Delta S \\
&+& 2(1+a)|S_{ij}|^{2} + 2(a+2) \Big(\frac{\partial S}{\partial t} + \frac{S}{t} + 2{\nabla}^{i}S{\nabla}_{i}u + 2S^{ij}{\nabla}_{i}u{\nabla}_{j}u \Big) \\
&-&2(a+2)\frac{\partial S}{\partial t}-4(a+2)S^{ij}{\nabla}_{i}u{\nabla}_{j}u-4R^{ij}{\nabla}_{i}u{\nabla}_{j}u + \frac{n}{t^{2}}(2-d) + {\mathcal D}_{(a, 2, 1)}(S_{ij}, -\nabla u) \\
&+& 2\gamma (t) (\Delta u - |\nabla u|^{2}) \\
&=& \Delta{Q_{S}} -2{\nabla}^{i}Q_{S}{\nabla}_{i}u - 2\Big|\nabla_{i}\nabla_{j}u - S_{ij}-\frac{1}{t}g_{ij} \Big|^{2} - \frac{2}{t}Q_{S} - \frac{2}{t}|\nabla u|^{2} -2(a+4)\Delta S \\
&+& 2(1+a)|S_{ij}|^{2} + 2(a+2){\mathcal H}(S_{ij}, -{\nabla}u)-2(a+2)\frac{\partial S}{\partial t}-4(a+2)S^{ij}{\nabla}_{i}u{\nabla}_{j}u \\
&-&4R^{ij}{\nabla}_{i}u{\nabla}_{j}u + \frac{n}{t^{2}}(2-d) + {\mathcal D}_{(a, 2, 1)}(S_{ij}, -\nabla u) + 2\gamma (t) (\Delta u - |\nabla u|^{2}).
\end{eqnarray*}
Since we have
\begin{eqnarray*}
{\mathcal D}_{(a, 2, 1)}(S_{ij}, -\nabla u) = a \Big(\frac{\partial S}{\partial t} - \Delta S - 2|S_{ij}|^{2} \Big) + 2 \Big(2{\nabla}^{i}S_{i\ell} - {\nabla}_{\ell}S \Big){\nabla}^{\ell}u + 2(R^{ij} -S^{ij}){\nabla}_{i}u{\nabla}_{j}u,
\end{eqnarray*}
we get
\begin{eqnarray*}
\frac{\partial Q_{S}}{\partial t} &=& \Delta{Q_{S}} -2{\nabla}^{i}Q_{S}{\nabla}_{i}u - 2\Big|\nabla_{i}\nabla_{j}u - S_{ij}-\frac{1}{t}g_{ij} \Big|^{2} - \frac{2}{t}Q_{S} - \frac{2}{t}|\nabla u|^{2} \\
&+& 2(a+2){\mathcal H}(S_{ij}, -{\nabla}u)-2 \Big(R^{ij} +(2a+5)S^{ij}\Big) {\nabla}_{i}u{\nabla}_{j}u + 2 \Big(2{\nabla}^{i}S_{i\ell} - {\nabla}_{\ell}S \Big){\nabla}^{\ell}u \\
&-& \Big((a+4)\frac{\partial S}{\partial t} -2|S_{ij}|^{2} + (3a+8)\Delta S \Big) + \frac{n}{t^{2}}(2-d) + 2\gamma (t) (\Delta u - |\nabla u|^{2}).
\end{eqnarray*}
On the other hand, we also get the following by a direct computation:
%by the definition of $Q_S$ in the case where $\alpha = 2$, $\beta = 1$ and $b = 0$:
\begin{eqnarray*}
2\gamma (t) (\Delta u - |\nabla u|^{2}) = \gamma (t){Q}_{S} - \gamma (t) |\nabla u|^{2} - a \gamma (t) S + d \gamma (t) \frac{n}{t}.
\end{eqnarray*}
Using this, we obtain the desired result.
\end{proof}

\subsection{Case of $v = - \log f -\frac{n}{2}\log(4{\pi}t)$}

As in Subsection \ref{for-u-1}, let $f$ be a positive solution of (\ref{nonlinear-heat-c}).
%the following nonlinear forward heat equation with potential term $-cS$:
%\begin{eqnarray*}
%\frac{\partial f}{\partial t} = {\Delta}_{}f + \gamma(t) f\log f- cSf.
%\end{eqnarray*}
Let $v = - \log f-\frac{n}{2}\log(4{\pi}t)$. A direct computation tells us that $v$ satsifies
\begin{eqnarray}\label{for-u-differ}
\frac{\partial v}{\partial t} = \Delta v - |\nabla v|^{2} + cS -\frac{n}{2t} + \gamma(t) \Big(v +\frac{n}{2}\log(4{\pi}t) \Big).
\end{eqnarray}
\begin{thm}\label{for-thm-3}
Let $g(t)$ be a solution to the geometric flow (\ref{GF}) and $u$ satisfies (\ref{for-u-differ}). Let
\begin{eqnarray*}
{R}_{S} = \alpha \Delta v - \beta |\nabla v|^{2} + a{S} - b\frac{v}{t}-d\frac{n}{t},
\end{eqnarray*}
where $\alpha, \beta, a, b$ and $d$ are constants. Assume that $\alpha \not= 0$ and $\alpha \not= \beta$. Then $R_{S}$ satisfies
\begin{eqnarray*}\label{Def-H}
\frac{\partial R_{S}}{\partial t} &=& \Delta{R_{S}} -2{\nabla}^{i}R_{S}{\nabla}_{i}u - 2(\alpha - \beta)\Big|\nabla_{i}\nabla_{j}u - \frac{\alpha}{2(\alpha-\beta)}S_{ij}-\frac{\lambda}{2t}g_{ij} \Big|^{2} \\
&+& 2(a-\beta c){\nabla}^{i}v{\nabla}_{i}S - \frac{2(\alpha -\beta)}{\alpha}\frac{\lambda}{t}R_{S} + \frac{(\alpha -\beta)n{\lambda}^{2}}{2t^{2}}- \Big(b + \frac{2(\alpha-\beta)\lambda {\beta}}{\alpha} \Big)\frac{|\nabla v|^{2}}{t} \\
&+& \Big(2a + \frac{\alpha^{2}}{2(\alpha-\beta)} \Big)|S_{ij}|^{2} + \Big(\alpha{\lambda} - bc + \frac{2(\alpha-\beta)\lambda a}{\alpha} \Big)\frac{S}{t} + \Big(1-\frac{2(\alpha-\beta)\lambda}{\alpha} \Big)\frac{b}{t^{2}}v \\
&+& \Big(1-\frac{2(\alpha-\beta)\lambda}{\alpha} \Big)\frac{d}{t^{2}}n + \alpha c {\Delta}S - 2{\alpha}R^{ij}{\nabla}_{i}v{\nabla}_{j}v + {\mathcal D}_{(a, \alpha, \beta)}(S_{ij}, -{\nabla}v) \\
& + & \alpha \gamma(t) \Delta v - 2{\beta}\gamma(t) |\nabla v|^{2} - b\frac{\gamma(t)}{t}\Big( v +\frac{n}{2}\log(4{\pi}t) \Big) + \frac{bn}{2t^{2}},
\end{eqnarray*}
where $\lambda$ is a constant.
\end{thm}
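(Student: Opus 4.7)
The theorem is the direct analogue of Theorem \ref{for-thm-2} with $u$ replaced by $v$, together with two extra additive terms $+\frac{bn}{2t^2}$ and $-\frac{b\gamma(t)}{t}\cdot\frac{n}{2}\log(4\pi t)$ appended at the end. The plan is therefore to follow the proofs of Proposition \ref{for-lem-key-1} and Theorem \ref{for-thm-2} line by line, substituting $u$ by $v$ and carefully tracking the additional contributions produced by the two extra time-dependent terms in (\ref{for-u-differ}) relative to (\ref{for-u-differ0}).

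The key starting observation is that $v - u = -\frac{n}{2}\log(4\pi t)$ is spatially constant, so $\nabla v = \nabla u$, $\nabla_i\nabla_j v = \nabla_i\nabla_j u$, and $\Delta v = \Delta u$. Consequently the commutator identities
$$\frac{\partial}{\partial t}(\Delta v) = 2S^{ij}\nabla_i\nabla_j v + \Delta(\partial_t v) - g^{ij}(\partial_t \Gamma^k_{ij})\nabla_k v$$
and
$$\frac{\partial}{\partial t}(|\nabla v|^2) = 2S^{ij}\nabla_i v\,\nabla_j v + 2\nabla^i(\partial_t v)\,\nabla_i v$$
hold with the same derivation as in Proposition \ref{for-lem-key-1}. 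When (\ref{for-u-differ}) is substituted for $\partial_t v$, the spatially constant summands $-\frac{n}{2t}$ and $\gamma(t)\cdot\frac{n}{2}\log(4\pi t)$ are annihilated by the outer $\Delta$ and $\nabla^i$ acting on $\partial_t v$; hence the $\alpha\,\partial_t(\Delta v)$ and $-\beta\,\partial_t(|\nabla v|^2)$ contributions to $\partial_t R_S$ agree term by term with their counterparts in the proof of Proposition \ref{for-lem-key-1} under the substitution $u\mapsto v$.

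The new terms thus enter only through the $-\frac{b}{t}\partial_t v$ piece of $\partial_t R_S$: this piece produces $-\frac{b}{t}\cdot(-\frac{n}{2t})=\frac{bn}{2t^2}$ together with $-\frac{b\gamma(t)}{t}\cdot\frac{n}{2}\log(4\pi t)$, on top of the $-\frac{b\gamma(t)}{t}v$ that directly mirrors the $-\frac{b\gamma(t)}{t}u$ term in Theorem \ref{for-thm-2}. From here the algebraic rearrangement is identical to that proof: one invokes the Bochner--Weitzenbock formula $\Delta(|\nabla v|^2)=2|\nabla\nabla v|^2+2\nabla^i(\Delta v)\nabla_i v+2R^{ij}\nabla_i v\nabla_j v$, completes the square to produce $-2(\alpha-\beta)\big|\nabla_i\nabla_j v - \frac{\alpha}{2(\alpha-\beta)}S_{ij} - \frac{\lambda}{2t}g_{ij}\big|^2$, and regroups the $\frac{\lambda}{t}\Delta v$, $\frac{b}{t}|\nabla v|^2$, $\frac{b}{t^2}v$, $\frac{dn}{t^2}$, and $\frac{S}{t}$ contributions into a multiple $-\frac{2(\alpha-\beta)\lambda}{\alpha t}R_S$ plus explicit corrections via the definition of $R_S$.

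I foresee no conceptual obstacle; the sole difficulty is pure bookkeeping, namely ensuring that the two novel scalar-in-$x$ contributions $\frac{bn}{2t^2}$ and $-\frac{b\gamma(t)n}{2t}\log(4\pi t)$ land in the claimed positions with the correct signs and are not inadvertently absorbed or doubled when the $\frac{\lambda}{t}\Delta v$ and $\frac{S}{t}$ combinations are collected into $R_S$ via the identity $\alpha\Delta v = R_S + \beta|\nabla v|^2 - aS + \frac{b}{t}v + \frac{dn}{t}$.
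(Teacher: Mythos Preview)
Your proposal is correct and follows essentially the same approach as the paper: both hinge on the observation that $v-u=-\tfrac{n}{2}\log(4\pi t)$ is spatially constant, so $\nabla v=\nabla u$ and $\Delta v=\Delta u$, whence the only new contributions arise from the $-\tfrac{b}{t}\partial_t v$ piece. The paper is slightly more economical than your plan: rather than re-running the computations of Proposition~\ref{for-lem-key-1} and Theorem~\ref{for-thm-2} with $v$ in place of $u$, it writes $R_S = Q_S + \tfrac{bn}{2t}\log(4\pi t)$, invokes the already-established formula for $\partial_t Q_S$ from Theorem~\ref{for-thm-2}, and then converts $Q_S\to R_S$ and $u\to v$ in that formula---but this is purely a packaging difference, not a conceptual one.
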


\begin{proof}
The idea of the proof is similar to that of Theorem \ref{for-thm-2}. In fact, notice that we have $v = u - \frac{n}{2}\log(4{\pi}t)$. Hence $\nabla u = \nabla v$ and $\Delta u = \Delta v$ hold. Moreover,
$$
R_{S} =Q_{S} + \frac{bn}{2t}\log(4{\pi}t).
$$
Then Theorem \ref{for-thm-2} and direct computations imply the desired result.
\end{proof}

As a special case, we get
\begin{cor}\label{cor-Da-f}
Let $g(t)$ be a solution to the geometric flow (\ref{GF}) and $v$ satisfies
\begin{eqnarray*}
\frac{\partial v}{\partial t} = \Delta v - |\nabla v|^{2} -(a+4)S -\frac{n}{2t} + \gamma(t)\Big(v + \frac{n}{2}\log(4{\pi}t)\Big).
\end{eqnarray*}
Let
\begin{eqnarray*}
{R}_{S} = 2\Delta v - |\nabla v|^{2} +a{S} -d\frac{n}{t},
\end{eqnarray*}
where $a$ and $d$ are constants. Then $R_{S}$ satisfies
\begin{eqnarray*}
\frac{\partial R_{S}}{\partial t} &=& \Delta{R_{S}} -2{\nabla}^{i}R_{S}{\nabla}_{i}v - 2\Big|\nabla_{i}\nabla_{j}v - S_{ij}-\frac{1}{t}g_{ij} \Big|^{2} - \Big(\frac{2}{t} -\gamma(t) \Big)R_{S} \\
&+& \Big(-\frac{2}{t} -\gamma(t) \Big)|\nabla v|^{2} - a \gamma(t)S + 2(a+2){\mathcal H}(S_{ij}, -{\nabla}v) + \frac{n}{t^{2}}(2-d) + d{\gamma}(t)\frac{n}{t} \\
&-& \Big((a+4)\frac{\partial S}{\partial t} -2|S_{ij}|^{2} + (3a+8)\Delta S \Big) + 2\Big( 2{\nabla}^{i}S_{i \ell} - {\nabla}_{\ell}S \Big){\nabla}^{\ell}v \\
&-& 2 \Big(R^{ij} + (2a+5)S^{ij} \Big){\nabla}_{i}v{\nabla}_{j}v
\end{eqnarray*}
\end{cor}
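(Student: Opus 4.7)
The plan is to specialize Theorem \ref{for-thm-3} with exactly the same choice of constants that was used in Corollary \ref{cor-D}, namely $\alpha = 2$, $\beta = 1$, $b = 0$, $c = -(a+4)$, and $\lambda = 2$. This is the natural choice because $R_S = 2\Delta v - |\nabla v|^2 + aS - d\frac{n}{t}$ matches the pattern $\alpha \Delta v - \beta |\nabla v|^2 + aS - b\frac{v}{t} - d\frac{n}{t}$ precisely with these values, and the coefficient $c = -(a+4)$ in the PDE for $v$ matches the $-(a+4)S$ term.

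First I would substitute these values into the general evolution equation of Theorem \ref{for-thm-3}. The key simplification is that taking $b = 0$ kills every term in the theorem that is sensitive to the distinction between $v$ and $u$: the $-\frac{b}{t}|\nabla v|^2$ term, the $\frac{b}{t^2}v$ term, the correction $\frac{bn}{2t^2}$, and crucially the $-b\frac{\gamma(t)}{t}\bigl(v + \frac{n}{2}\log(4\pi t)\bigr)$ term that carries the only occurrence of $\log(4\pi t)$. Thus, after this substitution, the evolution equation for $R_S$ looks formally identical to the intermediate expression one gets for $Q_S$ in the proof of Corollary \ref{cor-D}, but with $v$ in place of $u$.

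Next I would expand $\mathcal{D}_{(a,2,1)}(S_{ij}, -\nabla v)$ using Definition \ref{Def-HJ}, which yields the terms $a\bigl(\partial_t S - \Delta S - 2|S_{ij}|^2\bigr)$, $2(2\nabla^i S_{i\ell} - \nabla_\ell S)\nabla^\ell v$, and $2(R^{ij} - S^{ij})\nabla_i v \nabla_j v$. Combining these with the $2(a+2)\mathcal{H}(S_{ij}, -\nabla v)$ term (expanded via Definition \ref{h-d-de-1}) and collecting the $\partial_t S$, $\Delta S$, $|S_{ij}|^2$, and $S^{ij}\nabla_i v \nabla_j v$ contributions reproduces exactly the grouping displayed in the statement: $-\bigl((a+4)\partial_t S - 2|S_{ij}|^2 + (3a+8)\Delta S\bigr)$ together with $-2(R^{ij} + (2a+5)S^{ij})\nabla_i v \nabla_j v$ and the divergence-type term $2(2\nabla^i S_{i\ell} - \nabla_\ell S)\nabla^\ell v$.

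Finally, I would absorb the remaining $\gamma(t)$-dependent piece $2\gamma(t)(\Delta v - |\nabla v|^2)$ by rewriting it using the definition of $R_S$, in direct analogy with the last step of the proof of Corollary \ref{cor-D}:
\begin{eqnarray*}
2\gamma(t)(\Delta v - |\nabla v|^2) = \gamma(t) R_S - \gamma(t)|\nabla v|^2 - a\gamma(t) S + d\gamma(t)\frac{n}{t}.
\end{eqnarray*}
This produces the coefficient $-\bigl(\frac{2}{t} - \gamma(t)\bigr)$ on $R_S$ and the extra $-\gamma(t)|\nabla v|^2$, $-a\gamma(t)S$, and $d\gamma(t)\frac{n}{t}$ contributions seen in the claim. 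There is no genuine obstacle here — the proof is pure bookkeeping, and the mild subtlety worth being careful about is ensuring that the choice $b = 0$ really does remove every $\log(4\pi t)$ contribution so that the formula closes in terms of $v$ alone without any residual $t$-only additive term.
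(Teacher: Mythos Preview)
Your proposal is correct and follows exactly the paper's own approach: apply Theorem \ref{for-thm-3} with $\alpha = 2$, $\beta = 1$, $b = 0$, $c = -(a+4)$, $\lambda = 2$, and then repeat the algebraic regrouping from the proof of Corollary \ref{cor-D} with $v$ in place of $u$. Your observation that $b=0$ eliminates every $\log(4\pi t)$ contribution is precisely the point that makes the computation identical to that corollary.
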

\begin{proof}
The idea of proof is similar to that of Corollary \ref{cor-D}. Use Theorem \ref{for-thm-3} in the case where $\alpha = 2$, $\beta = 1$, $b = 0$, $c=-a-4$, $\lambda = 2$.
\end{proof}

%%%%%%%%%%%%%%%%%%%%%%%%%%%%%%%%%%%%%%%%%%%%
\section{Proof of Theorem \ref{main-E}}\label{E-proof}
%%%%%%%%%%%%%%%%%%%%%%%%%%%%%%%%%%%%%%%%%%%%

By Corollary \ref{cor-D} in the case where $a=-3$, we obtain
\begin{eqnarray*}
\frac{\partial Q_{S}}{\partial t} &=& \Delta{Q_{S}} -2{\nabla}^{i}Q_{S}{\nabla}_{i}u - 2\Big|\nabla_{i}\nabla_{j}u - S_{ij}-\frac{1}{t}g_{ij} \Big|^{2} - \Big(\frac{2}{t} -\gamma(t) \Big)Q_{S} \\
&+& \Big(-\frac{2}{t} -\gamma(t) \Big)|\nabla u|^{2} + 3 \gamma(t)S +\frac{n}{t^{2}}(2-d) + d{\gamma}(t)\frac{n}{t} - \Big(2{\mathcal H}(S_{ij}, -{\nabla}u) + {\mathcal D}(S_{ij}, -{\nabla}u) \Big) \\
&\leq & \Delta{Q_{S}} -2{\nabla}^{i}Q_{S}{\nabla}_{i}u - \Big(\frac{2}{t} -\gamma(t) \Big)Q_{S} + \Big(-\frac{2}{t} -\gamma(t) \Big)|\nabla u|^{2} + 3 \gamma(t)S \\
&+& \frac{n}{t^{2}}(2-d) + d{\gamma}(t)\frac{n}{t} - \Big(2{\mathcal H}(S_{ij}, -{\nabla}u) + {\mathcal D}(S_{ij}, -{\nabla}u) \Big).
\end{eqnarray*}
Now we assume that $d \geq 2$ holds. Moreover, by the assumption of Theorem \ref{main-E}, we also get
\begin{eqnarray*}
2{\mathcal H}(S_{ij}, -{\nabla}u) + {\mathcal D}(S_{ij}, -{\nabla}u) \geq 0, \ S \geq 0, \
-\frac{2}{t} \leq \gamma(t) \leq 0.
\end{eqnarray*}
Therefore we are able to obtain
\begin{eqnarray*}
\frac{\partial Q_{S}}{\partial t} &\leq & \Delta{Q_{S}} -2{\nabla}^{i}Q_{S}{\nabla}_{i}u - \Big(\frac{2}{t} -\gamma(t) \Big)Q_{S}.
\end{eqnarray*}
Notice that
$$
Q_{S} < 0
$$
holds for $t$ small enough which depends on $d$. By using the maximal principle, we get the desired result. \par
Similarly, we get the following by using Corollary \ref{cor-Da-f}:
\begin{thm}\label{main-f-thm}
Let $g(t)$ be a solution to the geometric flow (\ref{GF}) on a closed oriented smooth $n$-manifold $M$satisfying
\begin{eqnarray*}
2{\mathcal H}(S_{ij}, X) + {\mathcal D}(S_{ij}, X) \geq 0, \ S \geq 0.
\end{eqnarray*}
hold for all vector fields $X$ and all time $t \in [0, T)$ for which the flow exists. Let $v$ satisfies
\begin{eqnarray*}
\frac{\partial v}{\partial t} = \Delta v - |\nabla v|^{2} + cS -\frac{n}{2t} + \gamma(t) \Big(v +\frac{n}{2}\log(4{\pi}t) \Big).
\end{eqnarray*}
and assume that $\gamma(t)$ satsisfies
\begin{eqnarray*}
-\frac{2}{t} \leq \gamma(t) \leq 0
\end{eqnarray*}
for for all time $t \in (0, T)$. Let
\begin{eqnarray*}
{R}_{S} = 2\Delta v - |\nabla v|^{2} -3{S} -d\frac{n}{t},
\end{eqnarray*}
where $d \geq 2$ is a constant. Then for all time $t \in (0, T)$,
$$
{R}_{S} \leq 0
$$
holds.
\end{thm}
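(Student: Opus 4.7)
The plan is to mirror exactly the proof of Theorem \ref{main-E}, substituting Corollary \ref{cor-Da-f} for Corollary \ref{cor-D} and $v$ for $u$. I would first apply Corollary \ref{cor-Da-f} with the specialization $a=-3$. Since $2(a+2)=-2$, the explicit $\mathcal{H}$-contribution and the remaining $S$-derivative terms reorganize on the right-hand side into the single combination $-\bigl(2\mathcal{H}(S_{ij}, -\nabla v) + \mathcal{D}(S_{ij}, -\nabla v)\bigr)$, in complete parallel with the opening calculation of Section \ref{E-proof}.

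Next I would discard all non-positive terms on that right-hand side. Under the hypotheses, each of the following is $\leq 0$: the full square $-2\bigl|\nabla_i\nabla_j v - S_{ij} - \frac{1}{t}g_{ij}\bigr|^2$; the term $\bigl(-\frac{2}{t} - \gamma(t)\bigr)|\nabla v|^2$, since $\gamma(t) \geq -\frac{2}{t}$; the term $3\gamma(t)S$, since $\gamma(t) \leq 0$ and $S \geq 0$; the constants $\frac{n}{t^2}(2-d)$ and $d\gamma(t)\frac{n}{t}$, since $d \geq 2$ and $\gamma(t) \leq 0$; and $-(2\mathcal{H} + \mathcal{D})$, by the standing curvature hypothesis. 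What remains is the clean differential inequality
$$\frac{\partial R_S}{\partial t} \leq \Delta R_S - 2\nabla^i R_S \,\nabla_i v - \left(\frac{2}{t} - \gamma(t)\right) R_S,$$
whose zeroth-order coefficient satisfies $\frac{2}{t} - \gamma(t) \geq \frac{2}{t} > 0$.

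To close the argument I need an initial time at which $R_S < 0$ followed by a maximum-principle comparison. Because $v = u - \frac{n}{2}\log(4\pi t)$ differs from $u=-\log f$ only by a spatially constant quantity, $\nabla v = \nabla u$ and $\Delta v = \Delta u$; these derivatives stay bounded as $t \to 0^+$ by standard parabolic regularity for $f$, so the $-\frac{dn}{t}$ term in $R_S$ dominates and drives $R_S \to -\infty$ uniformly in $x$. Picking any $t_1>0$ small enough that $R_S(\cdot, t_1) < 0$ on $M$, I apply the weak parabolic maximum principle on $M \times [t_1, T)$: the positive zeroth-order coefficient rules out any positive maximum, because at such a point $\partial_t R_S \geq 0$, $\nabla R_S = 0$, $\Delta R_S \leq 0$ would force $0 \leq -\bigl(\frac{2}{t}-\gamma(t)\bigr)R_S < 0$. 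Letting $t_1 \to 0^+$ yields $R_S \leq 0$ throughout $(0,T)$. The only non-routine step is verifying the uniform $R_S \to -\infty$ as $t \to 0^+$; everything else is sign-checking or a direct transcription of the proof of Theorem \ref{main-E}.
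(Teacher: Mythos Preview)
Your proposal is correct and follows exactly the approach the paper intends: the paper simply writes ``Similarly, we get the following by using Corollary \ref{cor-Da-f}'' and states the theorem without further details, so your transcription of the proof of Theorem \ref{main-E} with $u$ replaced by $v$ and Corollary \ref{cor-D} replaced by Corollary \ref{cor-Da-f} is precisely what is meant. Your justification that $R_S \to -\infty$ as $t \to 0^+$ (via $\nabla v = \nabla u$, $\Delta v = \Delta u$ bounded) is in fact more explicit than the paper's corresponding line ``$Q_S < 0$ holds for $t$ small enough.''
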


%%%%%%%%%%%%%%%%%%%%%%%%%%%%%%%%%%%%%%%%%%%%
\section{Proof of Theorem \ref{main-Ee}}\label{Ee-proof}
%%%%%%%%%%%%%%%%%%%%%%%%%%%%%%%%%%%%%%%%%%%%%%%%%

By Corollary \ref{cor-D} in the case where $a=-3$ and $\gamma (t) \equiv \gamma$, where $\gamma$ is a constant, we obtain
\begin{eqnarray*}
\frac{\partial Q_{S}}{\partial t} &=& \Delta{Q_{S}} -2{\nabla}^{i}Q_{S}{\nabla}_{i}u - 2\Big|\nabla_{i}\nabla_{j}u - S_{ij}-\frac{1}{t}g_{ij} \Big|^{2} - \Big(\frac{2}{t} -\gamma \Big)Q_{S} \\
&+& \Big(-\frac{2}{t} -\gamma \Big)|\nabla u|^{2} + 3 \gamma S +\frac{n}{t^{2}}(2-d) + d{\gamma} \frac{n}{t} - \Big(2{\mathcal H}(S_{ij}, -{\nabla}u) + {\mathcal D}(S_{ij}, -{\nabla}u) \Big) \\
&\leq & \Delta{Q_{S}} -2{\nabla}^{i}Q_{S}{\nabla}_{i}u -\frac{2}{n}\Big(\Delta u -S - \frac{n}{t} \Big)^{2} - \Big(\frac{2}{t} -\gamma \Big)Q_{S} - \frac{2}{t}|\nabla u|^{2} \\
& -& \gamma |\nabla u|^{2} + 3 \gamma S + \frac{n}{t^{2}}(2-d) + d{\gamma}\frac{n}{t} - \Big(2{\mathcal H}(S_{ij}, -{\nabla}u) + {\mathcal D}(S_{ij}, -{\nabla}u) \Big) \\
&\leq & \Delta{Q_{S}} -2{\nabla}^{i}Q_{S}{\nabla}_{i}u -\frac{2}{n}\Big(\Delta u -S - \frac{n}{t} \Big)^{2} - \Big(\frac{2}{t} -\gamma \Big)Q_{S} \\
& -& \gamma |\nabla u|^{2} + 3 \gamma S + \frac{n}{t^{2}}(2-d) + d{\gamma} \frac{n}{t} - \Big(2{\mathcal H}(S_{ij}, -{\nabla}u) + {\mathcal D}(S_{ij}, -{\nabla}u) \Big)
\end{eqnarray*}
On the other hand, we have
$$
|\nabla u|^{2} =2 \Big(\Delta u -S - \frac{n}{t} \Big) - Q_{S} -S - \frac{n}{t}(d-2).
$$
Therefore, we obtain the following:
\begin{eqnarray*}
\frac{\partial Q_{S}}{\partial t}
&\leq & \Delta{Q_{S}} -2{\nabla}^{i}Q_{S}{\nabla}_{i}u -\frac{2}{n}\Big(\Delta u -S - \frac{n}{t} \Big)^{2} - \Big(\frac{2}{t} -\gamma \Big)Q_{S} \\
& -& \gamma \Big(2 \Big(\Delta u -S - \frac{n}{t} \Big) - Q_{S} -S - \frac{n}{t}(d-2) \Big) + 3 \gamma S + \frac{n}{t^{2}}(2-d) + d{\gamma} \frac{n}{t} \\
&-& \Big(2{\mathcal H}(S_{ij}, -{\nabla}u) + {\mathcal D}(S_{ij}, -{\nabla}u) \Big) \\
&=& \Delta{Q_{S}} -2{\nabla}^{i}Q_{S}{\nabla}_{i}u -\frac{2}{n}\Big(\Delta u -S - \frac{n}{t} \Big)^{2} - \Big(\frac{2}{t} -\gamma \Big)Q_{S} \\
& -& 2 \gamma \Big(\Delta u -S - \frac{n}{t} \Big) + \gamma Q_{S} +4\gamma S + \frac{n}{t}\gamma (d-2) +  \frac{n}{t^{2}}(2-d) + d{\gamma}\frac{n}{t} \\
&-& \Big(2{\mathcal H}(S_{ij}, -{\nabla}u) + {\mathcal D}(S_{ij}, -{\nabla}u) \Big) \\
&=& \Delta{Q_{S}} -2{\nabla}^{i}Q_{S}{\nabla}_{i}u -\frac{2}{n}\Big(\Delta u -S - \frac{n}{t} \Big)^{2} - \Big(\frac{2}{t} -2\gamma \Big)Q_{S}  - 2 \gamma \Big(\Delta u -S - \frac{n}{t} \Big) \\
&+& 4\gamma S + \frac{2n}{t}\gamma (d-1) +  \frac{n}{t^{2}}(2-d) - \Big(2{\mathcal H}(S_{ij}, -{\nabla}u) + {\mathcal D}(S_{ij}, -{\nabla}u) \Big). \\
\end{eqnarray*}
Since we also have the following by a direct computation:
$$
-2 \gamma \Big(\Delta u -S - \frac{n}{t} \Big) = \frac{2}{n} \gamma \Big(\Delta u -S - \frac{n}{t} - \frac{n}{2} \Big)^{2} - \frac{2}{n}\gamma \Big(\Delta u -S - \frac{n}{t} \Big)^{2} - \frac{n}{2}\gamma.
$$
Hence, we obtain
\begin{eqnarray*}
\frac{\partial Q_{S}}{\partial t}
&\leq & \Delta{Q_{S}} -2{\nabla}^{i}Q_{S}{\nabla}_{i}u -\frac{2}{n} \Big(1 + \gamma \Big) \Big(\Delta u -S - \frac{n}{t} \Big)^{2} - \Big(\frac{2}{t} -2\gamma \Big)Q_{S}  \\
&+& \frac{2}{n} \gamma \Big(\Delta u -S - \frac{n}{t} - \frac{n}{2} \Big)^{2} + 4\gamma S + \frac{2n}{t}\gamma (d-1) +  \frac{n}{t^{2}}(2-d) - \frac{n}{2}\gamma \\
&-& \Big(2{\mathcal H}(S_{ij}, -{\nabla}u) + {\mathcal D}(S_{ij}, -{\nabla}u) \Big). \\
\end{eqnarray*}
Now suppose that $-1 \leq \gamma \leq 0$. Then the above implies
\begin{eqnarray*}
\frac{\partial Q_{S}}{\partial t}
&\leq & \Delta{Q_{S}} -2{\nabla}^{i}Q_{S}{\nabla}_{i}u - \Big(\frac{2}{t} -2\gamma \Big)Q_{S} +  4\gamma S + \frac{2n}{t}\gamma (d-1)  \\
&+&  \frac{n}{t^{2}}(2-d) - \frac{n}{2}\gamma - \Big(2{\mathcal H}(S_{ij}, -{\nabla}u) + {\mathcal D}(S_{ij}, -{\nabla}u) \Big).
\end{eqnarray*}
Assume that $d \geq 2$ holds. Moreover, by the assumption of Theorem \ref{main-Ee}, we also get
the following:
$$
2{\mathcal H}(S_{ij}, -{\nabla}u) + {\mathcal D}(S_{ij}, -{\nabla}u) \geq 0, \ S \geq 0.
$$
Then we have
\begin{eqnarray*}
\frac{\partial Q_{S}}{\partial t}
&\leq & \Delta{Q_{S}} -2{\nabla}^{i}Q_{S}{\nabla}_{i}u - \Big(\frac{2}{t} -2\gamma \Big)Q_{S} + \frac{2n}{t}\gamma (d-1) - \frac{n}{2}\gamma. \\
\end{eqnarray*}
Since we also have
$$
- \Big(\frac{2}{t} - 2\gamma \Big)Q_{S} = - \Big(\frac{2}{t} - 2\gamma \Big)\Big(Q_{S} + \frac{n}{4}\gamma \Big) - \frac{n}{2t}\gamma - \frac{n}{2}\gamma^{2},
$$
we obtain
\begin{eqnarray*}
\frac{\partial }{\partial t} \Big(Q_{S} + \frac{n}{4}\gamma \Big)
&\leq & \Delta{\Big(Q_{S} + \frac{n}{4}\gamma \Big)} -2{\nabla}^{i}\Big(Q_{S} + \frac{n}{4}\gamma \Big){\nabla}_{i}u - \Big(\frac{2}{t} -2\gamma \Big)\Big(Q_{S} + \frac{n}{4}\gamma \Big) \\
& + & \frac{n}{t}\gamma (2d-\frac{5}{2}) - \frac{n}{2}(\gamma^{2} + \gamma) \\
&\leq & \Delta{\Big(Q_{S} + \frac{n}{4}\gamma \Big)} -2{\nabla}^{i}\Big(Q_{S} + \frac{n}{4}\gamma \Big){\nabla}_{i}u - \Big(\frac{2}{t} -2\gamma \Big)\Big(Q_{S} + \frac{n}{4}\gamma \Big) \\
& - & \frac{n}{2}(\gamma^{2} + \gamma),
\end{eqnarray*}
where notice that
$$
\frac{n}{t}\gamma (2d-\frac{5}{2}) \leq 0
$$
under $-1 \leq \gamma \leq 0$ and $d \geq 2$. Finally, we get the following by taking $\gamma =-1$:
\begin{eqnarray*}
\frac{\partial }{\partial t} \Big(Q_{S} - \frac{n}{4} \Big)
&\leq & \Delta{\Big(Q_{S} - \frac{n}{4} \Big)} -2{\nabla}^{i}\Big(Q_{S} - \frac{n}{4} \Big){\nabla}_{i}u - \Big(\frac{2}{t} +2 \Big)\Big(Q_{S} - \frac{n}{4} \Big).
\end{eqnarray*}
Notice that
$$
Q_{S} < \frac{n}{4}
$$
holds for $t$ small enough which depends on $d$. By using the maximal principle, we obtain the desired result. \par

Similarly, we get the following by using Corollary \ref{cor-Da-f}:
\begin{thm}\label{main-Ee-P}
Let $g(t)$ be a solution to the geometric flow (\ref{GF}) on a closed oriented smooth $n$-manifold $M$satisfying
\begin{eqnarray*}
2{\mathcal H}(S_{ij},X) + {\mathcal D}(S_{ij}, X) \geq 0, \ S \geq 0.
\end{eqnarray*}
hold for all vector fields $X$ and all time $t \in [0, T)$ for which the flow exists. Let $v$ satisfies
\begin{eqnarray*}
\frac{\partial v}{\partial t} = \Delta v - |\nabla v|^{2} -S + \frac{n}{2t}- \Big(v +\frac{n}{2}\log(4{\pi}t) \Big).
\end{eqnarray*}
Let
\begin{eqnarray*}
{R}_{S} = 2\Delta v - |\nabla v|^{2} -3{S} -d\frac{n}{t},
\end{eqnarray*}
where $d \geq 2$ is any fixed constant. Then for all time $t \in (0, T)$,
$$
{R}_{S} \leq \frac{n}{4}
$$
holds.
\end{thm}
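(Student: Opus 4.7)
The plan is to mirror the proof of Theorem \ref{main-Ee} in Section \ref{Ee-proof} verbatim, using Corollary \ref{cor-Da-f} (the $v$-version) in place of Corollary \ref{cor-D}. The two corollaries have identical right-hand sides under the formal substitution $u \leftrightarrow v$, so the whole argument carries over once the parameters are identified: I would take $a = -3$, $c = -1$ and the constant choice $\gamma(t) \equiv -1$, which matches the evolution equation for $v$ in the statement.

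First I would write the evolution equation for $R_S$ coming from Corollary \ref{cor-Da-f} with those parameters, and apply the pointwise Cauchy--Schwarz bound
$$-2\Bigl|\nabla_i\nabla_j v - S_{ij} - \tfrac{1}{t}g_{ij}\Bigr|^2 \leq -\tfrac{2}{n}\Bigl(\Delta v - S - \tfrac{n}{t}\Bigr)^2$$
to replace the Hessian term by a scalar square. The hypothesis $2\mathcal{H}(S_{ij},-\nabla v) + \mathcal{D}(S_{ij},-\nabla v) \geq 0$ lets me drop that combination outright, and $S \geq 0$ together with $\gamma = -1$ controls the $3\gamma S$ contribution.

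Next, using the definition of $R_S$ to rewrite
$$|\nabla v|^2 = 2\Bigl(\Delta v - S - \tfrac{n}{t}\Bigr) - R_S - S - (d-2)\tfrac{n}{t},$$
I eliminate $|\nabla v|^2$ in favor of $R_S$ and the trace quantity $\Delta v - S - n/t$. Then the completed-square identity
$$-2\gamma\Bigl(\Delta v - S - \tfrac{n}{t}\Bigr) = \tfrac{2\gamma}{n}\Bigl(\Delta v - S - \tfrac{n}{t} - \tfrac{n}{2}\Bigr)^2 - \tfrac{2\gamma}{n}\Bigl(\Delta v - S - \tfrac{n}{t}\Bigr)^2 - \tfrac{n\gamma}{2}$$
is precisely what I need: for $\gamma = -1$ the coefficient $1+\gamma$ in front of the remaining quadratic in $(\Delta v - S - n/t)$ vanishes, and the leftover $\tfrac{2\gamma}{n}(\cdot)^2$ is nonpositive and may be dropped. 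After these cancellations, the surviving inequality takes the form
$$\frac{\partial R_S}{\partial t} \leq \Delta R_S - 2\nabla^i R_S \nabla_i v - \Bigl(\tfrac{2}{t}+2\Bigr) R_S + \tfrac{n\gamma}{t}\bigl(2d - \tfrac{5}{2}\bigr) - \tfrac{n}{2}(\gamma^2+\gamma),$$
where $d \geq 2$ and $\gamma = -1$ force the last two terms to be nonpositive (in fact $\gamma^2+\gamma = 0$ exactly). Shifting by $n/4$ via $-(\tfrac{2}{t}+2)(R_S - \tfrac{n}{4}) = -(\tfrac{2}{t}+2) R_S + \tfrac{n}{2t}+\tfrac{n}{2}$ to absorb the residual constants, I obtain a closed parabolic inequality for $R_S - n/4$. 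Since the $-dn/t$ piece in $R_S$ forces $R_S - n/4 < 0$ for $t$ near $0$, the standard parabolic maximum principle delivers $R_S \leq n/4$ on all of $(0,T)$.

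The main obstacle is not conceptual but rather the bookkeeping of tracking every inhomogeneous term through the long expression supplied by Corollary \ref{cor-Da-f}; in particular, one must verify that the extra summand $\tfrac{bn}{2t^2}$ and the logarithmic correction $\tfrac{n}{2}\log(4\pi t)$ appearing in that corollary contribute nothing here because $b = 0$. The key conceptual ingredient, namely that the choice $\gamma = -1$ simultaneously annihilates the coefficients $1+\gamma$ and $\gamma^2+\gamma$, is the same miracle that drove the proof of Theorem \ref{main-Ee}, so no genuinely new idea is needed.
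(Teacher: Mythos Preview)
Your proposal is correct and follows precisely the route the paper intends: the paper's own proof of Theorem \ref{main-Ee-P} consists only of the words ``Similarly, we get the following by using Corollary \ref{cor-Da-f}'', and you have spelled out exactly that parallel argument, including the crucial observation that $b=0$ kills the logarithmic and $\tfrac{bn}{2t^{2}}$ corrections so that the evolution for $R_{S}$ is formally identical to that for $Q_{S}$. The only minor slip is notational---in your displayed intermediate inequality you have already substituted $\gamma=-1$ into the coefficient $\tfrac{2}{t}-2\gamma$ while leaving $\gamma$ symbolic elsewhere, and the terms $\tfrac{n\gamma}{t}(2d-\tfrac52)$ and $-\tfrac{n}{2}(\gamma^{2}+\gamma)$ actually arise \emph{after} the shift by $\tfrac{n}{4}\gamma$ rather than before---but this is exactly the bookkeeping you flagged and does not affect the argument.
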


%As we mentioned in Subsection \ref{fex}, Theorem \ref{main-Ee} in the case where $d = 2$ is nothing but Theorem 1.1 in \cite{C-Z}. On the other hand, we remark that Theorem \ref{main-Ee-P} is new even the case of Ricci flow.

%%%%%%%%%%%%%%%%%%%%%%%%%%%%%%%%%%%%%%%%%%%%
\section{Proof of Theorem \ref{main-F}}\label{F-proof}
%%%%%%%%%%%%%%%%%%%%%%%%%%%%%%%%%%%%%%%%%%%%%%%%%

By Proposition \ref{for-lem-key-1} in the case where $\alpha = 0$, $\beta = -1$, $a = c = 0$, $b = 1$, $d = 0$, we obtain
\begin{eqnarray*}
{Q}_{S} = |\nabla u|^{2} - \frac{u}{t}
\end{eqnarray*}
and
\begin{eqnarray*}
\frac{\partial Q_{S}}{\partial t}
&=& {\Delta}Q_{S} - 2{\nabla}^{i}Q_{S}{\nabla}_{i}u -2|\nabla \nabla u|^{2} - \frac{1}{t}|\nabla u|^{2} + \frac{1}{t^{2}}u  + {\mathcal D}_{(0, 0, -1)}(S_{ij}, -\nabla u) \\
&+& 2\gamma(t) |\nabla u|^{2} - \frac{\gamma(t)}{t}u \\
&=& {\Delta}Q_{S} - 2{\nabla}^{i}Q_{S}{\nabla}_{i}u - \frac{1}{t}{Q}_{S} + 2\gamma(t) |\nabla u|^{2} - \frac{\gamma(t)}{t}u + {\mathcal D}_{(0, 0, -1)}(S_{ij}, -\nabla u) \\
&=& {\Delta}Q_{S} - 2{\nabla}^{i}Q_{S}{\nabla}_{i}u - \Big(\frac{1}{t} - \gamma(t) \Big)Q_{S} + \gamma (t)|\nabla u|^{2}-2{\mathcal I}(S_{ij}, -\nabla u),
\end{eqnarray*}
where notice that ${\mathcal D}_{(0, 0, -1)}(S_{ij}, -\nabla u) = -2{\mathcal I}(S_{ij}, -\nabla u)$.
Since we assumed $\gamma (t) \leq 0$ and ${\mathcal I}(S_{ij}, -\nabla u) \geq 0$, the above implies
\begin{eqnarray*}
\frac{\partial Q_{S}}{\partial t}
&\leq& {\Delta}Q_{S} - 2{\nabla}^{i}Q_{S}{\nabla}_{i}u - \Big(\frac{1}{t} - \gamma(t) \Big)Q_{S}.
\end{eqnarray*}
Since
$$
Q_{S} < 0
$$
holds for $t$ small enough, the maximal principle tells us that the desired result holds.

%%%%%%%%%%%%%%%%%%%%%%%%%%

\vfill

{\footnotesize
\noindent
{Hongxin Guo }\\
{School of mathematics and information science, Wenzhou University, \\
Wenzhou, Zhe-jiang 325035, China}\\
{\sc e-mail}: guo@wzu.edu.cn \\

{\footnotesize
\noindent
{Masashi Ishida}\\
{Department of Mathematics, Graduate School of Science,
Osaka University \\
1-1, Machikaneyama, Toyonaka, Osaka, 560-0043, Japan}\\
{\sc e-mail}: ishida@math.sci.osaka-u.ac.jp \\

\end{document}